	\numberwithin{equation}{section}
	\newtheorem {theorem}{Theorem}[section]
	\newtheorem {lemma}[theorem]{Lemma}
	\newtheorem {corollary}[theorem]{Corollary}
\theoremstyle{definition}
\theoremstyle{theorem}
	\newcommand{\one}{\mathbf{1}}
	\newcommand{\E}{\mathbf{E}}
	\newcommand{\p}{\mathbf{P}}
	\newcommand{\N}{\mathbb{N}} 
	\newcommand{\R}{\mathbb{R}}
	\def\cA{\mathcal{A}}
	\def\cC{\mathcal{C}}
	\def\cL{\mathcal{L}}
	\def\cS{\mathcal{S}}
	\def\cW{\mathcal{W}}
	\let\@fnsymbol\@alph
\begin{document}

\title{\bfseries Poisson approximation for cycles in the generalised random graph}

\author{Matthias Lienau\footnotemark[1]}

\date{}
\renewcommand{\thefootnote}{\fnsymbol{footnote}}

\footnotetext[1]{Hamburg University of Technology. Email: matthias.lienau@tuhh.de}

\maketitle

\begin{abstract}
	The generalised random graph contains $n$ vertices with positive i.i.d.\ weights. The probability of adding an edge between two vertices is increasing in their weights. We require the weight distribution to have finite second moments and study the point process $\cC_n$ on $\{3,4,\dots\}$, which counts how many cycles of the respective length are present in the graph. We establish convergence of $\cC_n$ to a Poisson point process. Under the stronger assumption of the weights having finite fourth moments we provide the following results. When $\cC_n$ is evaluated on a bounded set $A$, we provide a rate of convergence. If the graph is additionally subcritical, we extend this to unbounded sets $A$ at the cost of a slower rate of convergence. From this we deduce the limiting distribution of the length of the shortest and the longest cycle when the graph is subcritical, including rates of convergence. All mentioned results also apply to the Chung-Lu model and the Norros-Reittu model.	
	\bigskip
	\\
	{\bf Keywords}. Generalised random graph, cycle counts, Poisson approximation, longest cycle, shortest cycle
	\smallskip
	\\
	{\bf MSC}. 05C80, 60F05, 60G70
\end{abstract}

\section{Introduction}

So-called complex networks are ubiquitous in our modern world. This term refers to very large networks having a difficult global structure, such as the brain with its neurons and the connections between them. Further examples include social networks, the internet or (tele-)communication networks, showing that complex networks are not restricted to a niche research area but appear in many different fields of study. Due to the immense size of complex networks, a direct analysis is usually not feasible. Instead, one is looking for probabilistic models capturing key properties of real-world networks. We refer the reader to \cite{vdH_book} for more details on complex networks. One of the aforementioned desired properties is referred to as being scale-free, see also \cite{welldone}: It turns out that the proportion of vertices having degree $k$ is more or less proportional to $k^{-\gamma}$ for some $\gamma>1$ in various real-world examples. Visually speaking, when plotting the degree distribution on a $\log-\log$ scale, one obtains the shape of a line. Emulating this property with random graphs thus requires models with suitable degree distributions. Among these models is the generalised random graph introduced in \cite{GenRandGraph}, which can be seen as a generalisation of the classical Erd\H{o}s-R\'enyi graph. We work with a slight adaption of the original model, which was also used in \cite{vdEsker}. For $n\in\N$ one takes the vertices $[n]=\{1,\dots,n\}$ and assigns i.i.d.\ copies $W_1,\dots,W_n$ of a positive random variable $W$ to them. Conditionally on these weights, we connect any two distinct vertices $i,j\in[n]$, indicated by $i\leftrightarrow j$, independently with probability
\begin{align*}
	\p_\cW(i\leftrightarrow j)=\frac{W_iW_j}{W_iW_j+L_n}\quad\text{for}\quad L_n=\sum_{i=1}^nW_i,
\end{align*}
where $\p_\cW$ denotes the conditional probability with respect to $W_1,\dots,W_n$. Intuitively, one wants to use $W_iW_j/L_n$ as connection probability but needs an adjustment as this is not necessarily bounded by one. A suitable weight distribution leads to the desired scale-free behaviour as shown in Theorem 3.1 in \cite{GenRandGraph}. There are related random graph models such as the Chung-Lu model \cite{ChungLu2} or the Norros-Reittu model \cite{NorrosReittu2006} which use other variations of the desired connection probability $W_iW_j/L_n$. In Example 3.6 in \cite{Janson2008} it has been shown that these models are asymptotically equivalent if $\p(W>t)=o(t^{-2})$. They are also referred to as rank-1 inhomogeneous random graphs and Section 16.4 in \cite{BolJanRio2007} provides information on the emergence of a giant component, i.e.\ a component containing a positive fraction of all vertices as $n\to\infty$. It turns out that the graph is supercritical (with giant component) if and only if $\E[W^2]>\E[W]$. It is called subcritical when $\E[W^2]<\E[W]$ and critical when $\E[W^2]=\E[W]$. The exact size of the largest component in the subcritical regime was further specified in \cite{Janson2008_comp_size} whereas \cite{vdHBhavLe2010} and \cite{vdHBhavLe2012} studied the component size at criticality. Some of our results are limited to the subcritical regime.

Our aim is to analyse the number of cycles in the generalised random graph. Cycle counts and more general subgraph counts were already studied extensively for different classes of graphs such as the Erd\H{o}s-R\'enyi graph and random regular graphs, see e.g.\ Chapter 4 and Section 2.4, respectively, in \cite{BollobasBook} and the references therein. Recently, there have been results for the expected number of cycles of a fixed length in the generalised random graph when the weight distribution has a regularly varying tail with parameter $\beta\in(1,2)$, i.e.\ when $\p(W>t)=\ell(t)t^{-\beta}$ for a slowly varying function $\ell$. In \cite{Janssen2019}, the authors study asymptotics of the expected number of cycles (and cliques). This builds on results of \cite{bianconi2005}, which required a cutoff for the weight distribution. Additionally, \cite{vdh_counting_triangles} investigates the asymptotic expected number of triangles and shows that most of the triangles are found among vertices of degree $\sqrt{n}$. In \cite{clara_optimal_subgraph} a similar approach is employed to investigate the asymptotic expected number of more general subgraphs, including cycles of any length. The aforementioned results are all in the case where $\E[W^2]=\infty$, so in particular in the supercritical regime. 

There are also results for weights with lighter tails. Throughout the paper we write
\begin{align}
	\lambda_k=\frac{1}{2k}\bigg(\frac{\E[W^2]}{\E[W]}\bigg)^k \label{eq:def_lambda_k}
\end{align}
for $k\in\N$. In \cite{liu_dong} it is shown that the number of triangles converges to a Poisson random variable with parameter $\lambda_3$ when the weights are bounded. This result was generalised in \cite{bobkov}, where it was shown that the number of cycles having length $k$ converges to a Poisson random variable with parameter $\lambda_k$, given that $\E[W^{2k+1}]<\infty$ or a slightly weaker tail condition holds. Additionally, they provide the rate of convergence $O(n^{-1/2})$ in the total variation distance. In this paper we strengthen this result in different ways. If one is only interested in a qualitative result, we manage to reduce the moment condition to $\E[W^2]<\infty$. In order to keep $\lambda_k$ finite, this cannot be relaxed any further. Concerning quantitative results, we reduce the moment condition $\E[W^{2k+1}]<\infty$ in \cite{bobkov} to $\E[W^4]<\infty$ by using a similar strategy but avoiding certain summands. Having finite fourth moments no longer depends on the cycle length $k$ and is a weaker assumption even for the shortest cycle of length $k=3$. Additionally, we increase the rate of convergence to $O(n^{-1})$. Finally, we do not restrict to cycles of a fixed length $k\in\{3,4,\dots\}$ but allow for lengths in some bounded set $A\subseteq\{3,4,\dots\}$ and establish asymptotic independence of the numbers of cycles of different lengths. We have additional results in the subcritical regime, i.e.\ when $\E[W^2]<\E[W]$. In this case, we show that with high probability there are no cycles whose length grows at least logarithmically in $n$. Moreover, we derive quantitative Poisson convergence for the number of all cycles whose lengths lie in some, possibly unbounded, set $A\subseteq\{3,4,\dots\}$. This allows us to derive the limiting distribution of and a rate of convergence in the Kolmogorov distance for the length of the shortest and the longest cycle. 

The rest of this paper is organised as follows. The previously mentioned results are presented in Section \ref{section:main} where we also discuss how they transfer to the Chung-Lu model and the Norros-Reittu model. In Section \ref{sec:setting} we provide the main tools which we employ whereas the proofs are postponed to the remaining sections.

\section{Main results}\label{section:main}
We consider the point process $\cC_n$ on $\N_{\geq3}=\{3,4,\dots\}$ where $\cC_n(A)$ denotes the number of cycles whose length lies in $A$ for $A\subseteq\N_{\geq3}$. For $k\in\N$ and $x_1,\dots,x_k\in\N_{\geq3}$ we write $\cC_n(x_1,\dots,x_k)$ instead of $\cC_n(\{x_1,\dots,x_k\})$ to simplify notation. Let $\eta$ denote a Poisson point process on $\N_{\geq3}$ with intensity measure given by $\lambda(\{k\})=\lambda_k$ for $k\in\N_{\geq3}$ and $\lambda_k$ as in \eqref{eq:def_lambda_k}. Our first theorem deals with convergence of $\cC_n$ in distribution  with respect to the vague topology on the set of counting measures on $\N_{\geq3}$, denoted by $\overset{d}{\longrightarrow}$. This is equivalent to convergence of the finite-dimensional distributions of $(\cC_n(k))_{k\in\N_{\geq3}}$, see Chapter 4 in \cite{Kallenberg_random_measures} for more details on $\overset{d}{\longrightarrow}$ in this setting. 
\begin{theorem}\label{thm:vague_conv}
	If $\E[W^2]<\infty$, then
	\begin{align*}
		\cC_n\overset{d}{\longrightarrow} \eta \quad\text{as }n\to\infty. 
	\end{align*}
\end{theorem}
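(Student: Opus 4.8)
The plan is to prove convergence of the finite-dimensional distributions of $(\cC_n(k))_{k \in \N_{\geq 3}}$, which by the cited equivalence yields $\cC_n \overset{d}{\longrightarrow} \eta$. Fix a finite set $A = \{k_1, \dots, k_m\} \subseteq \N_{\geq 3}$; it suffices to show that the random vector $(\cC_n(k_1), \dots, \cC_n(k_m))$ converges in distribution to a vector of independent Poisson random variables with means $\lambda_{k_1}, \dots, \lambda_{k_m}$. The natural tool here is the method of factorial moments (equivalently, the Chen--Stein method for Poisson approximation of a sum of weakly dependent indicators): it is enough to verify that for every choice of nonnegative integers $r_1, \dots, r_m$ one has
\begin{align*}
	\E\Bigl[\prod_{i=1}^m (\cC_n(k_i))_{(r_i)}\Bigr] \longrightarrow \prod_{i=1}^m \lambda_{k_i}^{r_i}
	\quad\text{as } n \to \infty,
\end{align*}
where $(x)_{(r)} = x(x-1)\cdots(x-r+1)$ denotes the falling factorial. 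Since $\cC_n(k)$ is itself a sum of Bernoulli indicators $\mathbf{1}\{C \text{ is a cycle in the graph}\}$ over all potential $k$-cycles $C$ on $[n]$, the left-hand side expands as a sum over tuples of \emph{distinct} potential cycles (of the prescribed lengths) of the probability that all of them are simultaneously present.

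The main computation is therefore to show that the dominant contribution to this sum comes from tuples of \emph{vertex-disjoint} cycles, and that each such term contributes asymptotically the product of individual cycle probabilities. First I would compute $\E[\cC_n(k)]$: conditioning on the weights, the probability that a fixed $k$-subset spanned in a fixed cyclic order forms a cycle is $\prod \p_\cW(i \leftrightarrow j) = \prod \frac{W_i W_j}{W_i W_j + L_n}$; expanding, using $L_n / n \to \E[W]$ a.s.\ by the law of large numbers, and using that $\frac{1}{n^k}\sum_{\text{distinct } i_1,\dots,i_k} W_{i_1}^2 \cdots W_{i_k}^2 \to \E[W^2]^k$ (a.s., again by LLN, which only needs $\E[W^2] < \infty$), one gets that the number of cyclic orderings $(k-1)!/2$ times $\binom{n}{k}$ times the per-cycle probability $\sim (\E[W^2]/(\E[W] \cdot n))^k$ converges to $\lambda_k = \frac{1}{2k}(\E[W^2]/\E[W])^k$. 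For the joint factorial moments, the contribution of tuples of pairwise vertex-disjoint cycles factorizes in the limit into $\prod \lambda_{k_i}^{r_i}$ by the same argument (the weights appearing on disjoint vertex sets are genuinely independent, and conditioning artifacts are negligible). It then remains to show that tuples sharing at least one vertex contribute $o(1)$: such a configuration uses strictly fewer than $\sum r_i k_i$ vertices, so the number of ways to choose the vertices gains at most $n^{-1}$ relative to the number of edges whose probabilities (each of order $1/n$) must be multiplied, and one checks this beats the combinatorial factor. A mild subtlety is that if two cycles share exactly one vertex the edge count does \emph{not} drop, so one must verify separately (as in the triangle case of \cite{liu_dong}) that such "cactus" configurations still contribute $o(1)$ — this uses that the shared vertex forces a reduced power of $n$ in the vertex count while the weight sums remain bounded in expectation.

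The hard part will be controlling the error terms uniformly enough to push the moment computation through under the weak moment hypothesis $\E[W^2] < \infty$ alone. Unlike the quantitative results later in the paper, here there is no room to spend extra moments, so the estimates must be done carefully: one should bound $\p_\cW(i \leftrightarrow j) \leq \min\{1, W_i W_j / L_n\}$ and handle the event $\{L_n \geq \varepsilon n\}$ (which has probability $\to 1$) separately, and on its complement use the trivial bound that probabilities are at most $1$ together with a tail estimate on $L_n$. The truncation $W \wedge T$ can be used to reduce to bounded weights for the disjoint main term (letting $T \to \infty$ afterwards, monitoring that $\E[(W\wedge T)^2] \to \E[W^2]$), while the overlapping terms are dominated by expressions like $n^{-1}\E[W^2]^{c}$ for appropriate constants $c$ coming from the number of distinct weight-squared factors. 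Once every non-disjoint configuration is shown to be $o(1)$ and the disjoint part is shown to converge to the product of the $\lambda_{k_i}^{r_i}$, the convergence of all joint factorial moments gives convergence in distribution of the finite-dimensional marginals to independent Poissons, completing the proof.
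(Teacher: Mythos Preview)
Your factorial-moment strategy is natural, but there is a genuine gap precisely at the ``cactus'' configurations you flag as a mild subtlety. Consider two triangles sharing a single vertex $v$ but no edge. Conditionally on the weights, the bound $\p_\cW(\text{edge}) \le W_aW_b/L_n$ gives
\[
\E_\cW[X_\alpha X_\beta]\ \le\ \frac{W_v^4\,\prod_{u\neq v}W_u^2}{L_n^{6}},
\]
so after summing over the $O(n^5)$ such pairs one is left with a factor $\sum_v W_v^4$. Your proposal asserts that ``the weight sums remain bounded in expectation'', but this is exactly $\E[W^4]$, which is \emph{not} assumed finite here. Trying to rescue the bound by replacing one or two of the edge probabilities at $v$ by the trivial bound $1$ does not help either: you then lose a factor of $L_n$ in the denominator, and the resulting sum no longer tends to zero (you can check that with one edge dropped you produce $L_n^{[3]}/L_n$, and with two edges dropped the bound actually diverges like $n$). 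So the unconditional second factorial moment need not converge to $\lambda_k^2$ under only $\E[W^2]<\infty$, and the method of moments as you have set it up stalls.

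The paper's proof avoids this obstruction by a different mechanism: it works \emph{conditionally on the weights} throughout and applies the Chen--Stein bound with $B_\alpha$ taken to be the cycles sharing an \emph{edge} (not a vertex) with $\alpha$. Conditionally on $\cW$, edge-disjoint cycles are genuinely independent, so $b_3=0$, and cactus configurations simply never enter $b_1$ or $b_2$. The quantities that do appear, such as $L_n^{[4]}/L_n^2$ and $W_{(n)}^2/L_n$, are shown to tend to zero almost surely via the Marcinkiewicz--Zygmund strong law (which needs only $\E[W^2]<\infty$), after which the conditional total-variation bound is pushed through to the unconditional statement by dominated convergence. In short, the key idea you are missing is that conditioning on $\cW$ plus the edge-based dependency neighbourhood lets one sidestep the fourth-moment requirement entirely; an unconditional moment computation cannot do this.
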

From this we obtain that the numbers of cycles having different lengths are asymptotically independent. This follows from the aforementioned equivalence to convergence of the finite-dimensional distributions. Under stronger assumptions we achieve rates of convergence for $\cC_n(A)$ in the total variation distance. For two $\N_0$-valued random variables $X$ and $Y$ this distance is given by
\begin{align*}
	d_{TV}(X,Y)=\sup_{A\subseteq\N_0}|\p(X\in A)-\p(Y\in A)|.
\end{align*}
\begin{theorem}\label{thm:conv_rate_bounded}
	If $\E[W^4]<\infty$, then for all $N\in\N$ there exists a constant $C>0$ such that for all $A\subseteq\{3,4,\dots,N\}$ and $n\geq3$,
	\begin{align*}
		d_{TV}(\cC_n(A),\eta(A))\leq\frac{C}{n}.
	\end{align*}		
\end{theorem}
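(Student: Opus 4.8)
The plan is to use the Chen–Stein method for Poisson approximation. For a fixed $N$ and $A \subseteq \{3,\dots,N\}$, write $\cC_n(A) = \sum_{\ell \in A} \sum_{\gamma} I_\gamma$, where the inner sum runs over all potential cycles $\gamma$ of length $\ell$ in the complete graph on $[n]$, and $I_\gamma$ is the indicator that all edges of $\gamma$ are present. Let $W = \bigcup_\ell W_\ell$ index all these potential cycles. The standard local-dependence version of the Chen–Stein bound (e.g.\ Arratia–Goldstein–Gordon, or Chapter 10 of \cite{vdH_book}) gives
\begin{align*}
	d_{TV}\big(\cC_n(A), \mathrm{Pois}(\mu_n)\big) \leq b_1 + b_2 + b_3,
\end{align*}
where $\mu_n = \sum_\gamma \E[I_\gamma]$, $b_1 = \sum_\gamma \sum_{\gamma' \sim \gamma} \E[I_\gamma]\E[I_{\gamma'}]$, $b_2 = \sum_\gamma \sum_{\gamma' \sim \gamma, \gamma' \neq \gamma} \E[I_\gamma I_{\gamma'}]$ (the dependency neighbourhood $\gamma' \sim \gamma$ being the cycles sharing at least one vertex with $\gamma$, since edge-presences are conditionally independent given the weights — but here the weights are random, so one must be a little careful: two cycles are dependent only if they share an edge, since non-adjacent edges are unconditionally independent once we note that the weights enter each edge probability; I would therefore take the dependency graph to be "share at least one edge", or more safely "share at least two vertices"), and $b_3 = 0$ in the edge-sharing formulation. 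A final term of size $|\mu_n - \lambda(A)|$ accounts for replacing $\mu_n$ by the limiting intensity. So the theorem reduces to three estimates: (i) $|\mu_n - \lambda(A)| = O(1/n)$; (ii) $b_1 = O(1/n)$; (iii) $b_2 = O(1/n)$.

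For (i), the expected number of cycles of length $\ell$ is $\mu_n^{(\ell)} = \frac{1}{2\ell}\sum_{i_1,\dots,i_\ell \text{ distinct}} \E\big[\prod_{s=1}^\ell \frac{W_{i_s} W_{i_{s+1}}}{W_{i_s}W_{i_{s+1}} + L_n}\big]$ (indices cyclic). The heuristic is that $L_n \approx n\E[W]$ and each factor is $\approx W_{i_s}W_{i_{s+1}}/(n\E[W])$, so each weight appears squared and we get $\mu_n^{(\ell)} \to \frac{1}{2\ell}(\E[W^2]/\E[W])^\ell = \lambda_\ell$. To get the rate $O(1/n)$ I would write $\frac{xy}{xy+L_n} = \frac{xy}{L_n} - \frac{x^2y^2}{L_n^2} + O\big(\frac{x^3y^3}{L_n^3}\big)$, substitute, and carefully control (a) the replacement of $L_n$ by $n\E[W]$ using a concentration estimate for $L_n$ (the finite second moment of $W$ gives $\Var(L_n) = O(n)$, and one can localise on the event $|L_n - n\E[W]| \leq n^{3/4}$, say, whose complement has probability $O(n^{-1/2})$ — but to reach $O(1/n)$ one needs a finer analysis, using the fourth moment of $W$ to get $\E[(L_n - n\E[W])^4] = O(n^2)$, hence $\p(|L_n - n\E[W]| > n^{3/4}) = O(n^{-1})$); (b) the loss from requiring distinct indices versus summing over all tuples, which costs a factor $1 - O(1/n)$; and (c) the error terms in the expansion of each edge factor, which are lower order. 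The fourth-moment assumption enters precisely here, through the need for $\E[W^4] < \infty$ to bound sums like $\sum \E[W_{i_1}^2 \cdots]$ when one index coincides, and for the $L_n$ concentration.

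For (ii) and (iii): since $A$ is bounded (lengths at most $N$), there are finitely many cycle lengths, and $\mu_n = O(1)$. The term $b_1 = \sum_\gamma \E[I_\gamma] \sum_{\gamma' \sim \gamma} \E[I_{\gamma'}]$; for a fixed $\gamma$ of length $\leq N$, the cycles $\gamma'$ sharing at least two vertices with it: sharing a path of $j \geq 1$ edges with $\gamma$ reduces the number of free vertices, so $\sum_{\gamma' \text{ sharing} \geq 1 \text{ edge with }\gamma} \E[I_{\gamma'}] = O(1/n)$ (one free vertex is "used up"), giving $b_1 = O(1/n)$. For $b_2$, two distinct cycles sharing edges: here one uses that $\E[I_\gamma I_{\gamma'}]$ is (up to the weight-dependence, handled as in step (i)) roughly $(n\E[W])^{-e}$ times a product of weight moments, where $e$ is the number of distinct edges in $\gamma \cup \gamma'$, and counting the number of such pairs against the vertex budget again yields $O(1/n)$. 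The one subtlety is the weight dependence: two edge-disjoint cycles sharing a vertex $v$ are \emph{not} independent because both involve $W_v$ and $L_n$; I would handle this by noting $\E[I_\gamma I_{\gamma'}] \leq \E[I_\gamma]\E[I_{\gamma'}] \cdot (\text{correction})$ and absorbing the correction using moment bounds, or — cleaner — by redefining the dependency graph as "sharing a vertex" and showing $b_2$ (now including edge-disjoint vertex-sharing pairs, which contribute to $b_2$ not $b_3$) is still $O(1/n)$ because sharing a vertex already costs a factor $n^{-1}$ in the count of pairs.

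\textbf{Main obstacle.} The genuinely delicate part is step (i): squeezing the rate in $|\mu_n - \lambda(A)|$ down to $O(1/n)$ rather than the easy $O(n^{-1/2})$. This requires replacing $L_n$ by its mean with an $L^1$-error of order $n^{-1}$ inside the expectation $\E\big[\prod_s (W_{i_s}W_{i_{s+1}}+L_n)^{-1}\big]$, which in turn needs both a good tail bound on $|L_n - n\E[W]|$ (fourth moment) and care that the weights $W_{i_1},\dots,W_{i_\ell}$ appearing in the numerator are \emph{also} among the summands of $L_n$ — so one should split $L_n = \sum_{j \notin \{i_1,\dots,i_\ell\}} W_j + (\text{bounded number of weights})$ and treat the second piece as an $O(1/n)$ perturbation. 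This is the step that \cite{bobkov} handled with the stronger moment assumption $\E[W^{2k+1}]<\infty$; the improvement to $\E[W^4]$ comes, as the introduction indicates, from organising the expansion so as to "avoid certain summands" — concretely, never letting three or more of the cycle's weights coincide in a way that would demand a high moment.
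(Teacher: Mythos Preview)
Your overall architecture --- Chen--Stein with the dependency neighbourhood taken as ``share at least one vertex'' so that $b_3=0$, plus a separate estimate $|\mu_n-\lambda(A)|=O(1/n)$ --- is exactly what the paper does. The treatment of $b_1$ and $b_2$ is also essentially right, though you have mislocated the role of the fourth moment: in the paper $\E[W^4]<\infty$ is used \emph{only} in the $b_2$ bound (Lemma~\ref{lem:bound_p_alpha_beta}), because when two cycles meet in an isolated common vertex that vertex has degree~$4$ in $\alpha\cup\beta$ and one needs $\E[W^4]$. The mean estimate (Lemma~\ref{lem:bound_comparing_exp}) needs only $\E[W^3]$, and the concentration of $L_n$ needs only $\E[W^2]$ (the paper uses a Chernoff-type bound, Lemma~\ref{lem:An_exp_decay}, giving exponential decay of $\p(L_n\le \lambda n\E[W])$ --- much stronger than your fourth-moment Markov bound, though for bounded $A$ either would do).

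There is however a real gap in your step~(i). Localising on $\{|L_n-n\E[W]|\le n^{3/4}\}$ only gives $L_n^{-k}=(n\E[W])^{-k}(1+O(n^{-1/4}))$ on that event, hence an $O(n^{-1/4})$ error in the mean, not $O(1/n)$; to close the gap along your lines you would have to Taylor-expand $L_n^{-k}$ to second order and argue that the first-order fluctuation $\E\big[\prod_i W_i^2\cdot(L_n-n\E[W])\big]$ is $O(1)$ rather than $O(\sqrt{n})$ by exploiting mean-zero independence. The paper sidesteps this entirely with a self-normalising trick: after passing from $L_n$ to $L_{n,k}=\sum_{j>k}W_j$ (which is independent of $W_1,\dots,W_k$), one factors
\[
\frac{(n)_k}{2k}\,\E\Big[\one_{A_n}\prod_{i=1}^k\frac{W_i^2}{L_{n,k}}\Big]
=\lambda_k\cdot(n)_k\,\E\Big[\one_{A_n}\prod_{i=1}^k\frac{W_i}{L_{n,k}}\Big],
\]
switches $L_{n,k}$ back to $L_n$, and then uses the \emph{exact} identity $(n)_k\,\E\big[\prod_i W_i/L_n\big]=\E\big[\sum_{v\in[n]^k_{\neq}}\prod_i W_{v_i}/L_n^k\big]=1-\E\big[\sum_{v\in[n]^k_{=}}\prod_i W_{v_i}/L_n^k\big]$, the last correction being $O(1/n)$. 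This avoids any need to replace $L_n$ by its mean and is the step that lets the paper drop the moment assumption from $\E[W^{2k+1}]$ to $\E[W^3]$ in the mean estimate.
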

We prove this theorem with typical Poisson approximation methods, which we describe in more detail in Section \ref{sec:setting}. Our remaining results concern the case $\E[W^2]<\E[W]$, i.e.\ the subcritical regime of the graph. Note that this assumption implies that the intensity measure $\lambda$ of the limiting process $\eta$ is finite. This allows us to treat cases where the considered cycle lengths  do not need to stay bounded with increasing $n$.  The following theorem shows that there are asymptotically no cycles whose lengths grow at least logarithmically in $n$, as $n\to\infty$. Throughout this paper we denote by $\lfloor \cdot \rfloor$ the floor function.

\begin{theorem}\label{thm:no_long_cycles}
	Let $\E[W^2]<\E[W]$ and $a>0$. There exists a constant $C>0$ such that for all $n\geq3$,
	\begin{align*}
		\p(\cC_n(\lfloor a\log(n) \rfloor+1,\dots,n)>0)\leq Cn^{a\log(p)}\quad \text{for}\quad p=\frac{2\E[W^2]}{\E[W^2]+\E[W]}.
	\end{align*}
\end{theorem}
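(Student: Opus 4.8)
The plan is to bound the probability via a first-moment (union bound) argument on the expected number of long cycles. Recall that a cycle of length $k$ on vertices $i_1,\dots,i_k$ is present precisely when all $k$ edges $i_1\leftrightarrow i_2,\dots,i_k\leftrightarrow i_1$ are present, and conditionally on the weights these events are independent. Hence
\begin{align*}
\E[\cC_n(k)\mid\cW]=\frac{1}{2k}\sum_{\substack{i_1,\dots,i_k\in[n]\\ \text{distinct}}}\prod_{\ell=1}^{k}\frac{W_{i_\ell}W_{i_{\ell+1}}}{W_{i_\ell}W_{i_{\ell+1}}+L_n}\leq\frac{1}{2k}\sum_{\substack{i_1,\dots,i_k\in[n]\\ \text{distinct}}}\prod_{\ell=1}^{k}\frac{W_{i_\ell}W_{i_{\ell+1}}}{L_n},
\end{align*}
with indices read cyclically. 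The product telescopes: each weight $W_{i_\ell}$ appears exactly twice, so $\prod_\ell W_{i_\ell}W_{i_{\ell+1}}=\prod_\ell W_{i_\ell}^2$, and we get the bound $\frac{1}{2k}L_n^{-k}\big(\sum_{i}W_i^2\big)^k$ after dropping the distinctness constraint. Taking expectations and using that $L_n=\sum_iW_i$ concentrates around $n\E[W]$ while $\sum_iW_i^2$ concentrates around $n\E[W^2]$, the ratio $\big(\sum_iW_i^2\big)/L_n$ should be close to $\E[W^2]/\E[W]<1$; the point is that we need a bound that holds with overwhelming probability, and on the bad event we can fall back on the deterministic estimate $\prod_\ell\frac{W_{i_\ell}W_{i_{\ell+1}}}{W_{i_\ell}W_{i_{\ell+1}}+L_n}\le1$, which costs only $n^k$ but is multiplied by a tiny probability.

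Concretely, I would first establish, via a Chernoff/concentration bound for the i.i.d.\ sums $L_n$ and $\sum_iW_i^2$ (these have finite first moments, and for the upper tail of $\sum W_i^2$ one can truncate or use a one-sided bound since $W\ge0$), that there is an event $E_n$ with $\p(E_n^c)$ decaying faster than any power of $n$ — say $\p(E_n^c)\le Cn^{-M}$ for $M$ as large as we like — on which $\sum_iW_i^2\le (1+\varepsilon)n\E[W^2]$ and $L_n\ge(1-\varepsilon)n\E[W]$ for a small fixed $\varepsilon>0$. Choosing $\varepsilon$ small enough that $q:=\frac{(1+\varepsilon)\E[W^2]}{(1-\varepsilon)\E[W]}<1$ still holds (possible since $\E[W^2]<\E[W]$), we get on $E_n$ the bound $\E[\cC_n(k)\mid\cW]\le\frac{1}{2k}q^k\le q^k$. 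Summing over $k$ from $\lfloor a\log n\rfloor+1$ to $n$ gives a geometric tail dominated by a constant times $q^{a\log n}=n^{a\log q}$.

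Then by Markov's inequality and splitting on $E_n$,
\begin{align*}
\p(\cC_n(\lfloor a\log n\rfloor+1,\dots,n)>0)\le\E\big[\cC_n(\lfloor a\log n\rfloor+1,\dots,n)\one_{E_n}\big]+\p(E_n^c)\le C_1 n^{a\log q}+Cn^{-M},
\end{align*}
and on $E_n^c$ the trivial bound $\cC_n(k)\le\binom{n}{k}\le n^k\le n^n$ is controlled since $\p(E_n^c)$ decays super-polynomially — to be safe one can arrange the concentration bound so that $\p(E_n^c)\le Cn^{-n^2}$ or simply note $\E[\cC_n(A)\one_{E_n^c}]\le n\cdot n^n\,\p(E_n^c)$ and pick $M$ accordingly, or better, absorb $E_n^c$ directly by using a Chernoff bound whose rate beats $2^n$. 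Finally, one checks that the stated constant $p=\frac{2\E[W^2]}{\E[W^2]+\E[W]}$ arises as the limit of $q$ as $\varepsilon\to0$ is $\E[W^2]/\E[W]$, but the paper's $p$ is strictly larger than $\E[W^2]/\E[W]$ (it is the "halfway" point between that ratio and $1$), which is exactly the slack one builds in to absorb the concentration error; so taking $\varepsilon$ small enough that $q\le p$ yields the claimed exponent $n^{a\log p}$.

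The main obstacle will be getting a clean super-polynomial (or at least sufficiently fast) bound on $\p(E_n^c)$ using only $\E[W^2]<\infty$ — a finite second moment does not by itself give exponential concentration of $\sum W_i^2$. This is presumably why the conclusion is phrased with the larger constant $p$ rather than $\E[W^2]/\E[W]$: one does not need sharp concentration, only that the bad-event contribution, even when multiplied by the crude count $n^n$, is negligible compared to $n^{a\log p}$. A robust route is a truncation argument: write $W_i^2=W_i^2\one_{W_i\le t_n}+W_i^2\one_{W_i>t_n}$ with $t_n$ a slowly growing threshold; the truncated part has a Bernstein-type exponential bound, while the contribution of the untruncated part is controlled in expectation by $n\E[W^2\one_{W>t_n}]=o(n)$ and in probability via Markov. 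One must be a little careful that this still delivers enough decay, but since we have the full interval from $\lfloor a\log n\rfloor+1$ to $n$ and only need the sum of $q^k$ over it, there is ample room, and I expect the bookkeeping — not any conceptual difficulty — to be the real work.
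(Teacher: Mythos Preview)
Your overall strategy---a first-moment bound combined with splitting on a good event---is the same as the paper's, and your decomposition $\p(\cC_n>0)\le\E[\cC_n\one_{E_n}]+\p(E_n^c)$ is correct (the later digression about controlling $\cC_n$ by $n^n$ on $E_n^c$ is unnecessary). The genuine gap is precisely the obstacle you flag: under only $\E[W^2]<\infty$ there is in general no polynomial bound on $\p\big(\sum_i W_i^2>(1+\varepsilon)n\E[W^2]\big)$. A single $W_i^2$ exceeding $\varepsilon n\E[W^2]$ already forces this deviation, and for distributions with $\p(W^2>t)\sim c/(t\log^2 t)$ (which have $\E[W^2]<\infty$ and can be scaled so that $\E[W^2]<\E[W]$) this happens with probability of order $1/\log^2 n$. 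Your truncation fix does not help: the residual $n\E[W^2\one\{W>t_n\}]$ may tend to zero arbitrarily slowly, so Markov on the untruncated part gives no polynomial rate either. Hence for such weight laws your bound on $\p(E_n^c)$ cannot match $n^{a\log p}$.

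The paper sidesteps the issue by \emph{not} conditioning on $\sum_i W_i^2$ at all. It sets $A_n=\{L_n>\lambda\E[W]n\}$ with the single choice $\lambda=\tfrac{\E[W^2]+\E[W]}{2\E[W]}$; only the lower tail of $L_n=\sum_i W_i$ is needed, and since $\E[W^2]<\infty$ this decays exponentially (Lemma~\ref{lem:An_exp_decay}). On $A_n$ one bounds each individual cycle $\alpha\in I_k$ rather than the conditional expectation of $\cC_n(k)$:
\[
\E[\one_{A_n}X_\alpha]\le\E\bigg[\one_{A_n}\prod_{i=1}^k\frac{W_{\alpha_i}^2}{L_n}\bigg]\le\E\bigg[\prod_{i=1}^k\frac{W_{\alpha_i}^2}{\lambda\E[W]n}\bigg]=\bigg(\frac{\E[W^2]}{\lambda\E[W]n}\bigg)^k=p^k n^{-k},
\]
where the expectation factorises by independence of the $W_{\alpha_i}$. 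Thus $\E[W^2]$ enters via taking expectations, not via pathwise control of $\sum_i W_i^2$. Summing over $|I_k|\le n^k$ cycles and then over $k>\lfloor a\log n\rfloor$ gives the geometric tail $\sum_k p^k\le n^{a\log p}/(1-p)$. The idea you are missing is exactly this swap of order: bound $L_n$ on the good event, drop the indicator, and then average out the weights $W_{\alpha_i}^2$ instead of summing them into $\sum_i W_i^2$ first.
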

Note that $p<1$ and thus $\log(p)<0$ in the previous theorem so that the upper bound tends to zero as $n\to\infty$. The following theorem is similar to the bounded case in Theorem \ref{thm:conv_rate_bounded}. At the cost of a slower rate of convergence the set $A$ no longer needs to be bounded. The proof relies on the fact that \textit{long} cycles can be ignored, as they are unlikely due to Theorem \ref{thm:no_long_cycles}. For the remaining cycles, we use the same approach as for Theorem \ref{thm:conv_rate_bounded}.

\begin{theorem}\label{thm:conv_rate_unbounded}
	Let $\E[W^4]<\infty$ and $\E[W^2]<\E[W]$. There exists a constant $C>0$ such that for all $n\geq3$ and $A\subseteq\N_{\geq3}$,
	\begin{align*}
		d_{TV}(\cC_n(A),\eta(A))\leq\frac{C\log(n)^3}{n}.
	\end{align*}
\end{theorem}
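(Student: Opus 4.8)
The plan is to reduce the unbounded case to the bounded case by truncating the cycle length at a level that grows logarithmically in $n$. Set $N = N(n) = \lfloor a\log(n)\rfloor$ for a constant $a>0$ to be chosen at the end, and split the given set as $A = A' \cup A''$ where $A' = A \cap \{3,\dots,N\}$ and $A'' = A \cap \{N+1,N+2,\dots\}$. Since $\cC_n$ only sees cycles of length at most $n$, we have $\cC_n(A'') = \cC_n(A \cap \{N+1,\dots,n\})$. By the triangle inequality,
\begin{align*}
	d_{TV}(\cC_n(A),\eta(A)) &\leq d_{TV}\big(\cC_n(A),\cC_n(A')\big) + d_{TV}\big(\cC_n(A'),\eta(A')\big) + d_{TV}\big(\eta(A'),\eta(A)\big).
\end{align*}
I would bound the three terms separately. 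The middle term is handled directly by Theorem \ref{thm:conv_rate_bounded}: since $A' \subseteq \{3,\dots,N\}$ with $N \leq a\log(n)$, we do not get a fixed $N$ but the constant $C$ in Theorem \ref{thm:conv_rate_bounded} grows with $N$; so I would need the slightly stronger observation that the proof of Theorem \ref{thm:conv_rate_bounded} actually gives a bound of the form $C' N^{c}/n$ (polynomial in $N$) for some absolute constants $C',c$ — tracking the $N$-dependence through that proof is the one place where I must reopen the earlier argument rather than cite it as a black box. With $N \asymp \log(n)$ this yields a bound $\lesssim \log(n)^{c}/n$ for the middle term, and one expects $c=3$ to be what comes out, matching the statement.

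For the first and third terms, note that $d_{TV}(\cC_n(A),\cC_n(A')) \leq \p(\cC_n(A)\neq\cC_n(A')) \leq \p(\cC_n(A'')>0) \leq \p(\cC_n(N+1,\dots,n)>0)$, which is exactly the quantity controlled by Theorem \ref{thm:no_long_cycles}: it is at most $C n^{a\log p}$ with $p = 2\E[W^2]/(\E[W^2]+\E[W]) < 1$. Choosing $a$ large enough that $a\log p \leq -1$ (equivalently $a \geq 1/|\log p|$) makes this term $\leq C/n$, which is negligible compared to $\log(n)^3/n$. For the third term, $d_{TV}(\eta(A'),\eta(A)) \leq \p(\eta(A)\neq\eta(A')) \leq \p(\eta(A'')>0) \leq \lambda(A'') \leq \sum_{k>N}\lambda_k = \sum_{k>N}\frac{1}{2k}(\E[W^2]/\E[W])^k$, which, since $\E[W^2]<\E[W]$ forces the ratio $\E[W^2]/\E[W] =: \rho < 1$, is bounded by a geometric tail $\sum_{k>N}\rho^k \lesssim \rho^{N} \lesssim \rho^{\,a\log n - 1} = O(n^{a\log\rho})$; enlarging $a$ further if necessary so that $a\log\rho \leq -1$ makes this $O(1/n)$ as well. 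Combining the three bounds gives $d_{TV}(\cC_n(A),\eta(A)) \leq C\log(n)^3/n$ with a constant $C$ independent of $A$ and $n\geq 3$.

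The main obstacle is the middle term: one has to verify that the proof of Theorem \ref{thm:conv_rate_bounded} is in fact \emph{uniform enough in $N$} to permit $N$ to grow like $\log n$, i.e.\ that its implied constant depends on $N$ only polynomially (with exponent leading to the $\log(n)^3$ factor), rather than, say, exponentially through a crude union bound over cycle lengths $3,\dots,N$. Concretely, in the Chen–Stein / Poisson-approximation bound one controls terms like $\sum_{k\le N}\lambda_k$, $\sum_{k\le N}\lambda_k^2$ and the "$b_1,b_2$" coupling sums, each of which involves geometric factors $(\E[W^2]/\E[W])^k$; in the subcritical regime these geometric factors are summable, so the $N$-dependence of those pieces is in fact $O(1)$, and the polynomial-in-$N$ (hence polylog-in-$n$) growth should come only from the combinatorial factors counting overlapping cycles and from the moment estimates on weight products along a cycle. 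So the real content is bookkeeping: rerun the proof of Theorem \ref{thm:conv_rate_bounded} keeping $N$ as a free parameter, confirm the bound takes the shape $C N^{3}/n$ (or better), and then optimise $a$ in $N=\lfloor a\log n\rfloor$ against the two tail estimates above. All remaining steps are routine once this uniformity is in hand.
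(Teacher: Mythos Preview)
Your proposal is correct and follows essentially the same approach as the paper: truncate at $N=\lfloor a\log n\rfloor$, use the triangle inequality to split off the tails $\p(\cC_n(\{N+1,\dots\})>0)$ and $\p(\eta(\{N+1,\dots\})>0)$, bound these via Theorem~\ref{thm:no_long_cycles} and a geometric tail respectively (choosing $a$ so that both exponents $a\log p$ and $a\log\rho$ are at most $-1$), and then control the truncated term by rerunning the Poisson approximation argument with explicit $N$-dependence. The only cosmetic difference is that the paper does not literally reopen the proof of Theorem~\ref{thm:conv_rate_bounded} but instead invokes the ``part~(b)'' versions of Lemmas~\ref{lem:bound_p_alpha}--\ref{lem:bound_comparing_exp}, which are precisely the uniform-in-$k$ estimates for $k\le\lfloor a\log n\rfloor$ under the subcritical hypothesis that you anticipate needing; the $\log(n)^3$ factor indeed arises from the $b_2$-type combinatorial sum over segment intersections, exactly as you guessed.
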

The previous theorem allows us to derive the asymptotic distribution of the length of the shortest cycle $\cC_{\min}^{(n)}$ and of the longest cycle $\cC_{\max}^{(n)}$. When there are no cycles at all, we choose the convention $\cC_{\max}^{(n)}=\cC_{\min}^{(n)}=0$. We define the $\{0,3,4,\dots\}-$valued random variables $\cS$ and $\cL$ for the limits of the shortest and longest cycle, respectively, via 
\begin{align*}
	\p(\cS=0)=\p(\cL=0)=\exp\bigg(-\sum_{k=3}^\infty\lambda_k\bigg)
\end{align*}
and 
\begin{align*}
	\p(3\leq \cS \leq t)&=1-\exp\bigg(-\sum_{k=3}^t\lambda_k\bigg)
\end{align*}
as well as
\begin{align*}
	\p(\cL \leq t)=\exp\bigg(-\sum_{k=t+1}^{\infty}\lambda_k\bigg)
\end{align*}
for all $t\in\N_{\geq3}$.	We measure the rate of convergence in the Kolmogorov distance given by
\begin{align*}
	d_{Kol}(X,Y)=\sup_{t\in \R}|\p(X\leq t)-\p(Y\leq t)|
\end{align*}
for two random variables $X$ and $Y$.
\begin{theorem}\label{thm:shortest_longest_cycle}
	Let $\E[W^4]<\infty$ and $\E[W^2]<\E[W]$.
	\begin{enumerate}
		\item There exists a constant $C>0$ such that for all $n\geq3$,
		\begin{align*}
			d_{Kol}(\cC_{\min}^{(n)},\cS )\leq\frac{C\log(n)^3}{n}. 
		\end{align*}
		\item There exists a constant $C>0$ such that for all $n\geq3$,
		\begin{align*}
			d_{Kol}(\cC_{\max}^{(n)},\cL )\leq \frac{C\log(n)^3}{n}.
		\end{align*}
	\end{enumerate}
	
\end{theorem}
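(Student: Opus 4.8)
The plan is to express events about $\cC_{\min}^{(n)}$ and $\cC_{\max}^{(n)}$ in terms of $\cC_n$ evaluated on suitable sets, and then invoke Theorem \ref{thm:conv_rate_unbounded}. First I would observe that for $t\in\N_{\geq3}$ the event $\{\cC_{\min}^{(n)}\le t\}$ (with the convention that $\cC_{\min}^{(n)}=0$ when there are no cycles) coincides, up to the value $0$, with $\{\cC_n(3,\dots,t)\ge 1\}$: indeed $\cC_{\min}^{(n)}\in\{3,\dots,t\}$ iff at least one cycle has length in $\{3,\dots,t\}$, while $\cC_{\min}^{(n)}=0$ contributes to $\{\cC_{\min}^{(n)}\le t\}$ for every $t$. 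Concretely, for $t\ge 3$,
\begin{align*}
	\p(\cC_{\min}^{(n)}\le t)=\p(\cC_n(\{3,\dots,t\})\ge 1)+\p(\cC_n(\N_{\geq 3})=0),
\end{align*}
and the analogous identity holds for $\cS$ with $\eta$ in place of $\cC_n$, using the stated law of $\cS$. Likewise $\{\cC_{\max}^{(n)}\le t\}=\{\cC_n(\{t+1,t+2,\dots\})=0\}$ for $t\ge 3$, and similarly for $\cL$ with $\eta$.

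Next I would handle the range $t<3$ separately and trivially: for such $t$ both $\p(\cC_{\min}^{(n)}\le t)$ and $\p(\cS\le t)$ reduce to $\p(\cdot=0)$, so the contribution to the Kolmogorov distance is $|\p(\cC_n(\N_{\geq3})=0)-\p(\eta(\N_{\geq3})=0)|$, which is itself bounded by $d_{TV}(\cC_n(\N_{\geq3}),\eta(\N_{\geq3}))$. For $t\ge 3$, subtracting the two identities above gives
\begin{align*}
	|\p(\cC_{\min}^{(n)}\le t)-\p(\cS\le t)|\le |\p(\cC_n(\{3,\dots,t\})\ge1)-\p(\eta(\{3,\dots,t\})\ge1)|+|\p(\cC_n(\N_{\geq3})=0)-\p(\eta(\N_{\geq3})=0)|.
\end{align*}
Each term on the right is at most $d_{TV}(\cC_n(A),\eta(A))$ for the appropriate $A$ (namely $A=\{3,\dots,t\}$ and $A=\N_{\geq3}$), since the total variation distance dominates the difference of probabilities of any single event. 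Taking the supremum over $t$ and applying Theorem \ref{thm:conv_rate_unbounded} — which crucially holds uniformly over all $A\subseteq\N_{\geq3}$, so the bound $C\log(n)^3/n$ does not degrade as $t$ varies — yields part (i). Part (ii) is entirely parallel, using $A=\{t+1,t+2,\dots\}$ and noting that $\{\cC_{\max}^{(n)}=0\}=\{\cC_n(\N_{\geq3})=0\}$ is already captured by the case $A=\N_{\geq3}$ of the general set, so no separate "no cycles" term is even needed.

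The only genuinely delicate point is bookkeeping around the convention $\cC_{\min}^{(n)}=\cC_{\max}^{(n)}=0$ when the graph has no cycles, and checking that the piecewise definitions of $\cS$ and $\cL$ are matched correctly by the point-process events — in particular that $\p(\cS=0)=\exp(-\sum_{k\ge3}\lambda_k)=\p(\eta(\N_{\geq3})=0)$ is consistent with $\p(3\le\cS\le t)=1-\exp(-\sum_{k=3}^t\lambda_k)$ as $t\to\infty$, which it is since $\sum_{k\ge3}\lambda_k<\infty$ in the subcritical regime. There is no real analytic obstacle here: everything reduces to the uniform-in-$A$ bound already supplied by Theorem \ref{thm:conv_rate_unbounded}, and the argument is a short deterministic reduction plus the triangle inequality.
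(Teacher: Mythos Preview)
Your proposal is correct and takes essentially the same approach as the paper: both reduce the Kolmogorov distance to differences of probabilities of the form $\p(\cC_n(A)\in B)-\p(\eta(A)\in B)$ for suitable sets $A$, bound each such difference by $d_{TV}(\cC_n(A),\eta(A))$, and invoke the uniform-in-$A$ bound from Theorem \ref{thm:conv_rate_unbounded}. The only cosmetic difference is that the paper separates $\{\cC_{\min}^{(n)}=0\}$ and $\{3\le\cC_{\min}^{(n)}\le t\}$ while you combine them via the identity $\p(\cC_{\min}^{(n)}\le t)=\p(\cC_n(\{3,\dots,t\})\ge1)+\p(\cC_n(\N_{\geq3})=0)$; the resulting bound $2C\log(n)^3/n$ is identical.
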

\subsection*{Relation with similar models}
The four theorems above also hold for the Chung-Lu model \cite{ChungLu2} and the (erased) Norros-Reittu model \cite{NorrosReittu2006}. The original Norros-Reittu model may produce loops, i.e.\ edges from a vertex to itself, and multiple edges between two vertices. In order to obtain a simple graph one erases all loops and merges multiple edges into single edges. This yields the erased Norros-Reittu model. We quickly define the models. Similarly to the generalised random graph, one takes the vertices $[n]=\{1,\dots,n\}$ equipped with i.i.d.\ weights $W_1,\dots,W_n$ for $n\in\N$. Conditionally on the weights, one connects any two distinct vertices $i,j\in[n]$ independently with probability
$$
 \p_{\cW,CL}(i\leftrightarrow j)=\min(1,W_iW_j/L_n) \quad\text{and}\quad \p_{\cW,NR}(i\leftrightarrow j)=1-\exp(-W_iW_j/L_n)
$$
in the Chung-Lu model and the erased Norros-Reittu model, respectively. If one is only interested in the qualitative results, they follow immediately from the asymptotic equivalence provided in Example 3.6 in \cite{Janson2008}. If one wishes to keep quantitative results, one would need to quantify this asymptotic equivalence. We provide a simpler argument. All our proofs rely on lower and upper bounds of the connection probability
$$\p_\cW(i\leftrightarrow j)=\frac{W_iW_j}{W_iW_j+L_n}$$ for two distinct vertices $i,j\in[n]$ in the generalised random graph. From $1+x\leq \exp(x)$ for all $x\in\R$ one obtains 
$$
\exp(-x)\leq \frac{1}{1+x} \Leftrightarrow \frac{x}{1+x}=1-\frac{1}{1+x}\leq 1-\exp(-x)
$$
and with $x=W_iW_j/L_n$ we conclude
\begin{align*}
	\frac{W_iW_j}{W_iW_j+L_n}\leq 1-\exp\bigg(-\frac{W_iW_j}{L_n}\bigg)\leq \min\bigg(1,\frac{W_iW_j}{L_n}\bigg),
\end{align*}
where the last inequality uses $\exp(x)>0$ and $1-\exp(-x)\leq x$ for all $x\in\R$. Therefore we get
\begin{align*}
	\p_\cW(i\leftrightarrow j)\leq \p_{\cW,NR}(i\leftrightarrow j)\leq \p_{\cW,CL}(i\leftrightarrow j).
\end{align*}
This is also known as stochastic domination of the underlying graphs, see e.g.\ Subsection 6.8.2 in \cite{vdH_book}. The inequalities above imply that the lower bound we use for the generalised random graph applies to all three models. The upper bounds we use are $1$ and $W_iW_j/L_n$ and therefore also remain upper bounds for all three models. Thus, the proofs and the results transfer to the Chung-Lu model and the Norros-Reittu model.

\section{Results on Poisson approximation and the Poisson Cram\'er-Wold device}\label{sec:setting}

We start by providing the framework of the Poisson approximation from \cite{twomomentssuffice}, but in a conditioned setting. Let $I$ denote an arbitrary index set and $\cA$ a $\sigma$-field. We write $\E_\cA$ as a short-hand notation for the conditional expectation with respect to $\cA$. For all $\alpha\in I$ let $X_\alpha$ be a Bernoulli random variable. We are interested in the number of successes $S=\sum_{\alpha\in I}X_\alpha$. Additionally, let $B_\alpha\subseteq I$ be such that $\alpha\in B_\alpha$ for all $\alpha\in I$ and define
\begin{align}
	b_1&= \sum_{\alpha\in I}\sum_{\beta\in B_\alpha}p_\alpha p_\beta\quad& & \text{for}\quad p_\alpha=\E_\cA[X_\alpha],\label{def:b1}\\
	b_2&= \sum_{\alpha\in I}\sum_{\beta\in B_\alpha\setminus\{\alpha\}}p_{\alpha\beta}\quad& &\text{for}\quad p_{\alpha\beta}=\E_\cA[X_\alpha X_\beta]\quad \text{and}\label{def:b2}\\
	b_3&= \sum_{\alpha\in I}\E_\cA\left[\left|\E[X_\alpha-p_\alpha|\sigma(\cA,X_\beta\colon \beta\in B_\alpha^c)]\right|\right].\nonumber
\end{align}
We denote a mixed Poisson distribution by $X\sim\mathrm{Poi}(Y)$, meaning that the distribution of $X$ conditionally on $Y$ is a Poisson distribution with parameter $Y$. The proof of the following lemma is analogue to the proof of Theorem 1 in \cite{twomomentssuffice} and is postponed to Section \ref{sec:poisson_lemma}. In the case that $\cA$ is the trivial $\sigma$-field, the lemma below follows from Theorem 1 in \cite{twomomentssuffice}. See also \cite{ross} and \cite{steinsmethod} for more details on Stein's method for Poisson approximation. 
\begin{lemma}\label{lem:two_mom_suff_conditioned_version}
	If $T_\cA\sim\mathrm{Poi}(\E_\cA[S])$, then
	\begin{align*}
		d_{TV}(S,T_\cA)\leq \E\big[\min\big(1,b_{1}+b_{2}+b_{3}\big)\big].
	\end{align*}
\end{lemma}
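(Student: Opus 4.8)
The plan is to mimic the proof of Theorem~1 in \cite{twomomentssuffice}, but carry the argument through with the conditional expectation $\E_\cA$ in place of the ordinary expectation, and then average over $\cA$ at the end. Write $T_\cA\sim\mathrm{Poi}(\E_\cA[S])$ and let $\lambda_\cA=\E_\cA[S]=\sum_{\alpha\in I}p_\alpha$, which is an $\cA$-measurable random variable. The key observation is that, once we condition on $\cA$, the quantities $p_\alpha$, $p_{\alpha\beta}$ become (random but) fixed numbers, so that conditionally we are in exactly the setting of the local-dependence Poisson approximation: $S$ is a sum of Bernoulli variables with $\cA$-dependence neighbourhoods $B_\alpha$, and $b_1,b_2,b_3$ are the conditional analogues of Arratia--Goldstein--Gordon's quantities. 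Thus the natural target is the pathwise bound
\begin{align*}
	d_{TV}\big(\mathcal{L}(S\mid\cA),\,\mathrm{Poi}(\lambda_\cA)\big)\leq \min\big(1,b_1+b_2+b_3\big)
\end{align*}
holding almost surely, after which Lemma~\ref{lem:two_mom_suff_conditioned_version} follows by taking expectations: $d_{TV}(S,T_\cA)\leq \E\big[d_{TV}(\mathcal{L}(S\mid\cA),\mathrm{Poi}(\lambda_\cA))\big]$, using that total variation between the unconditional laws is at most the expectation of the total variation between the conditional laws (a standard convexity/Jensen fact), and that the trivial bound $d_{TV}\leq 1$ gives the $\min$ with~$1$.

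To prove the pathwise bound I would run Stein's method for Poisson approximation. Recall the Stein equation: for a test function $h=\one_{\{\cdot\in A_0\}}$ with $A_0\subseteq\N_0$, let $g=g_{h,\lambda_\cA}$ solve $\lambda_\cA g(j+1)-j g(j)=h(j)-\mathrm{Poi}(\lambda_\cA)\{A_0\}$, with the classical bounds $\|g\|_\infty\leq 1$ and $\|\Delta g\|_\infty=\sup_j|g(j+1)-g(j)|\leq \min(1,\lambda_\cA^{-1})\leq 1$. Then
\begin{align*}
	\p(S\in A_0\mid\cA)-\mathrm{Poi}(\lambda_\cA)\{A_0\}=\E_\cA\big[\lambda_\cA g(S+1)-S g(S)\big].
\end{align*}
Expand $\lambda_\cA g(S+1)=\sum_\alpha p_\alpha g(S+1)$ and $S g(S)=\sum_\alpha X_\alpha g(S)$, and for each $\alpha$ write $S=S_\alpha+\sum_{\beta\in B_\alpha}X_\beta$ with $S_\alpha=\sum_{\beta\in B_\alpha^c}X_\beta$. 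Splitting $g(S+1)-g(S_\alpha+1)$ and $g(S)-g(S_\alpha+1)$ into increments controlled by $\|\Delta g\|_\infty$, one gets the three error contributions: the $b_1$ term from replacing $g(S+1)$ by $g(S_\alpha+1)$ inside $\sum_\alpha p_\alpha(\cdot)$ (costing $\sum_\alpha p_\alpha \E_\cA[\sum_{\beta\in B_\alpha}X_\beta]$, and here one uses that $X_\alpha$ itself contributes the diagonal, giving $\sum_\alpha\sum_{\beta\in B_\alpha}p_\alpha p_\beta$ after bounding $\E_\cA[X_\beta]$ — this is where $\alpha\in B_\alpha$ matters); the $b_2$ term from the $X_\alpha g(S)$ piece where $S-(S_\alpha+1)$ involves $\sum_{\beta\in B_\alpha\setminus\{\alpha\}}X_\beta$, giving $\sum_\alpha\sum_{\beta\in B_\alpha\setminus\{\alpha\}}\E_\cA[X_\alpha X_\beta]=\sum_\alpha\sum_{\beta\in B_\alpha\setminus\{\alpha\}}p_{\alpha\beta}$; and the $b_3$ term from the remaining piece $\E_\cA[(p_\alpha-X_\alpha)g(S_\alpha+1)]$, where $g(S_\alpha+1)$ is $\sigma(\cA,X_\beta:\beta\in B_\alpha^c)$-measurable, so conditioning on that $\sigma$-field and using $\|g\|_\infty\leq1$ yields $\sum_\alpha \E_\cA[|\E[X_\alpha-p_\alpha\mid\sigma(\cA,X_\beta:\beta\in B_\alpha^c)]|]$.

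Summing these three bounds gives $|\p(S\in A_0\mid\cA)-\mathrm{Poi}(\lambda_\cA)\{A_0\}|\leq b_1+b_2+b_3$ almost surely; taking the supremum over $A_0\subseteq\N_0$ and then the minimum with the trivial bound $1$ gives the pathwise estimate, and taking $\E[\cdot]$ finishes the proof. The only genuinely new point compared to \cite{twomomentssuffice} is bookkeeping: one must make sure that every ``expectation'' in the Stein argument is the conditional $\E_\cA$, and that the $b_3$ term is handled by the tower property over the enlarged $\sigma$-field $\sigma(\cA,X_\beta:\beta\in B_\alpha^c)$ rather than over $\sigma(X_\beta:\beta\in B_\alpha^c)$ — I expect this measurability bookkeeping to be the main (though minor) obstacle, since everything else is a verbatim transcription of the classical argument with $\cA$ carried along. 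Finally, I would note that when $\cA$ is trivial the statement is literally Theorem~1 of \cite{twomomentssuffice}, which serves as a sanity check.
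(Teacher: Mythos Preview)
Your proposal is correct and takes essentially the same approach as the paper: both condition on $\cA$, bound the conditional total variation distance by $b_1+b_2+b_3$ via the Stein argument of \cite{twomomentssuffice}, and then average (using the convexity/sup-exchange step $d_{TV}(S,T_\cA)\leq \E[d_{TV}(\mathcal{L}(S\mid\cA),\mathrm{Poi}(\lambda_\cA))]$) together with the trivial bound $d_{TV}\leq 1$ to obtain the $\min$. The only difference is presentational: you spell out the Stein decomposition yielding $b_1,b_2,b_3$, whereas the paper simply writes the convexity step and then refers to the proof of Theorem~1 in \cite{twomomentssuffice} for the conditional bound.
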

From this lemma we deduce the following statement. Its short proof is also in Section \ref{sec:poisson_lemma}.
\begin{lemma}\label{lem:Tv_bound_both_terms}
	For $Z\sim\mathrm{Poi}(\mu)$ with $\mu>0$,
	\begin{align*}
		d_{TV}(S,Z)\leq \E[\min(1,b_1+b_2+b_3)]+\E[\min(1,|\E_\cA[S]-\mu|)].
	\end{align*}
\end{lemma}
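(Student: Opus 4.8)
The plan is to combine Lemma~\ref{lem:two_mom_suff_conditioned_version} with the triangle inequality for the total variation distance and a standard estimate for the distance between two Poisson laws. Let $T_\cA\sim\mathrm{Poi}(\E_\cA[S])$ as in Lemma~\ref{lem:two_mom_suff_conditioned_version}. Then
\begin{align*}
	d_{TV}(S,Z)\leq d_{TV}(S,T_\cA)+d_{TV}(T_\cA,Z),
\end{align*}
and the first summand is at most $\E[\min(1,b_1+b_2+b_3)]$ by Lemma~\ref{lem:two_mom_suff_conditioned_version}. Hence it only remains to show $d_{TV}(T_\cA,Z)\leq\E[\min(1,|\E_\cA[S]-\mu|)]$.

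For this I would first record the elementary fact that $d_{TV}(\mathrm{Poi}(\lambda_1),\mathrm{Poi}(\lambda_2))\leq\min(1,|\lambda_1-\lambda_2|)$ for all $\lambda_1,\lambda_2\geq0$. The bound by $1$ is trivial; for the other one may assume $\lambda_1\leq\lambda_2$ and couple the two distributions by writing a $\mathrm{Poi}(\lambda_2)$ variable as the sum of a $\mathrm{Poi}(\lambda_1)$ variable and an independent $\mathrm{Poi}(\lambda_2-\lambda_1)$ variable, so that the coupled pair coincides unless the extra summand is positive, an event of probability $1-e^{-(\lambda_2-\lambda_1)}\leq\lambda_2-\lambda_1$.

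Next I would condition on $\cA$: given $\cA$, the variable $T_\cA$ has a $\mathrm{Poi}(\E_\cA[S])$ distribution while $Z$ remains $\mathrm{Poi}(\mu)$, and $\E_\cA[S]$ is $\cA$-measurable. Thus for any $A\subseteq\N_0$,
\begin{align*}
	|\p(T_\cA\in A)-\p(Z\in A)|=\big|\E[\p(T_\cA\in A\mid\cA)-\p(Z\in A)]\big|\leq\E\big[\,|\p(T_\cA\in A\mid\cA)-\p(Z\in A)|\,\big].
\end{align*}
Taking the supremum over $A$ and moving it inside the expectation gives $d_{TV}(T_\cA,Z)\leq\E[d_{TV}(\mathrm{Poi}(\E_\cA[S]),\mathrm{Poi}(\mu))]$, and the estimate from the previous step with $\lambda_1=\E_\cA[S]$, $\lambda_2=\mu$ yields $d_{TV}(T_\cA,Z)\leq\E[\min(1,|\E_\cA[S]-\mu|)]$, as required.

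There is no genuine obstacle here; the only point deserving a line of care is the measurability step, namely that $\sup_A\E[|\p(T_\cA\in A\mid\cA)-\p(Z\in A)|]\leq\E[\sup_A|\p(T_\cA\in A\mid\cA)-\p(Z\in A)|]$ and that the inner supremum equals the deterministic function $\lambda\mapsto d_{TV}(\mathrm{Poi}(\lambda),\mathrm{Poi}(\mu))$ evaluated at $\E_\cA[S]$. Since $\N_0$ is countable the supremum is realised along a countable family of events, so no subtlety arises and the interchange is legitimate.
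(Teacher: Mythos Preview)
Your proof is correct and follows essentially the same approach as the paper: triangle inequality with the intermediate variable $T_\cA$, Lemma~\ref{lem:two_mom_suff_conditioned_version} for the first term, and conditioning on $\cA$ together with the Poisson--Poisson total variation bound for the second. The only cosmetic difference is that you supply a coupling argument for $d_{TV}(\mathrm{Poi}(\lambda_1),\mathrm{Poi}(\lambda_2))\leq|\lambda_1-\lambda_2|$, whereas the paper simply cites this inequality.
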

We wish to add a brief comment on the benefit of having the conditional version. Here it suffices to show $b_1+b_2+b_3\to0$ and $|\E_\cA[S]-\mu|\to 0$ in probability as $n\to\infty$ to obtain a qualitative, albeit not quantitative result. This allows us to get rid of the additional assumption $\E[W^4]<\infty$ required for the quantitative result in Theorem \ref{thm:conv_rate_bounded}.

We present the choices for $I,X_\alpha$ and $B_\alpha$ in our setting. For any set $D$ and $k\in\N$ we write $D^k_{\neq}$ for the $k$-tuples with pairwise distinct entries from $D$. For $k\in\N_{\geq3}$ and $n\in\N$ let 
\begin{align*}
	I_k=I_k(n)=\{\alpha\in[n]^k_{\neq}\colon \alpha_1=\min_{i=1,\dots,k}\alpha_i,\alpha_2<\alpha_k\},
\end{align*}
which represents the possible cycles of length $k$. The constraints on $\alpha$ in $I_k$ correspond to fixing a starting vertex $\alpha_1$ and an orientation of the cycle $\alpha$ to ensure that each cycle corresponds to exactly one $\alpha\in I_k$. We suppress the dependence of $I_k$ on $n$ in the notation for simplicity. In this work we study $\cC_n(A)$, the number of cycles whose length lies in some set $A\subseteq\N_{\geq3}$. For $k\in\N_{\geq3}$ and $\alpha\in I_k$ we write		
\begin{align}
	X_\alpha=\one\{\text{The cycle }\alpha\text{ exists} \}=\one\{\alpha_1\leftrightarrow\alpha_2\leftrightarrow\hdots\leftrightarrow \alpha_k\leftrightarrow\alpha_1\} \label{eq:def_X_alpha}
\end{align}
so that we have		
\begin{align*}
	\cC_n(A)=\sum_{k\in A}\sum_{\alpha\in I_k}X_\alpha=\sum_{\alpha\in I}X_\alpha\quad\text{for}\quad I=\bigcup_{k\in A}I_k.
\end{align*}
		
For $A\subseteq \N_{\geq3}$ and $\alpha\in I$ the choice of $B_\alpha$ will differ in Section \ref{sec:qualitative_theorem} and Section \ref{sec:quantitative}. The reason is a different choice of the $\sigma$-field $\cA$. In Section \ref{sec:qualitative_theorem}, i.e.\ for Theorem \ref{thm:vague_conv}, we choose $\cA=\cW$. Conditionally on the weights, the random variables $X_\beta,\beta\in I$, are independent as soon as they share no edges. Choosing
\begin{align}
	B_\alpha=B_\alpha(n)=\{\beta\in I\colon \alpha \text{ and }\beta \text{ share at least one edge}\} \label{def:B_alpha_conditioned}
\end{align}
therefore ensures that $b_3=0$ for this choice of $\cA$. In Section \ref{sec:quantitative} we choose $\cA$ as the trivial $\sigma$-field instead. If we were to choose $B_\alpha$ as above, then $b_3$ no longer needs to be zero as $X_\alpha$ and $X_\beta$ corresponding to two cycles with at least one shared vertex are not independent due to the weight of that vertex. This is why we choose
\begin{align}
	B_{\alpha}=B_\alpha(n)=\{\beta\in I\colon \alpha \text{ and }\beta \text{ share at least one vertex}\} \label{def:B_alpha}
\end{align}		
instead, which once more results in $b_3=0$. 

Next, we provide a corollary to the so-called Poisson Cramér-Wold device. We use this tool to establish the finite-dimensional convergence equivalent to the claim in Theorem \ref{thm:vague_conv}. A two-dimensional version of the statement can be found in Corollary 2.3 in \cite{poissoncramerwold}. The two-dimensional proof generalises to higher dimensions. 

For $m\in\N$ and $\N_0$-valued random variables $Y_1,\dots,Y_m$ as well as $q_1,\dots,q_m\in[0,1]$ let
\begin{align*}
	Y_i^{(q_i)}\sim\mathrm{Bin}(Y_i,q_i) \quad \text{ for }i=1,\dots,m, 
\end{align*}
where $Y_1^{(q_1)},\dots,Y_m^{(q_m)}$ are independent, conditionally on $Y_1,\dots,Y_m$. One refers to this as thinning.

\begin{corollary}\label{cor:cramer_wold}
	Let $(\boldsymbol{Y}_n=(Y_{1,n},\dots,Y_{m,n}))_{n\in\N}$ be a sequence of $\N_0^m$-valued random variables. Suppose that there are $\mu_1,\dots,\mu_m>0$ such that for all $q=(q_1,\dots,q_m)\in[0,1]^m,$
	\begin{align*}
		\sum_{i=1}^mY_{i,n}^{(q_i)}\overset{d}{\longrightarrow} Y_{q,\mu}\sim\mathrm{Poi}\bigg(\sum_{i=1}^mq_i\mu_i\bigg)\quad \text{as}\quad n\to\infty.
	\end{align*}
	Then 
	\begin{align*}
		\boldsymbol{Y}_n\overset{d}{\longrightarrow}\boldsymbol{Y}\sim\otimes_{i=1}^m\mathrm{Poi}(\mu_i) \quad \text{as}\quad n\to\infty.
	\end{align*}
\end{corollary}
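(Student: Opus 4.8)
The plan is to pass to joint probability generating functions and invoke the multivariate continuity theorem. For a vector $\boldsymbol Y_n=(Y_{1,n},\dots,Y_{m,n})$ of $\N_0$-valued random variables write $g_n(s_1,\dots,s_m)=\E\big[\prod_{i=1}^m s_i^{Y_{i,n}}\big]$ for its joint pgf on $[0,1]^m$. Since the pgf determines the law of an $\N_0^m$-valued random vector, and pointwise convergence of $g_n$ on $[0,1]^m$ to the pgf of a probability distribution on $\N_0^m$ entails convergence in distribution, and since the joint pgf of $\boldsymbol Y\sim\otimes_{i=1}^m\mathrm{Poi}(\mu_i)$ is $(s_1,\dots,s_m)\mapsto\prod_{i=1}^m e^{\mu_i(s_i-1)}$, it suffices to prove
\begin{align*}
	g_n(s_1,\dots,s_m)\longrightarrow\prod_{i=1}^m e^{\mu_i(s_i-1)}\qquad\text{for every }(s_1,\dots,s_m)\in[0,1]^m.
\end{align*}

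The key ingredient is the thinning--substitution identity. Conditionally on $Y_{i,n}$ we have $Y_{i,n}^{(q_i)}\sim\mathrm{Bin}(Y_{i,n},q_i)$, hence $\p(Y_{i,n}^{(q_i)}=0\mid\boldsymbol Y_n)=(1-q_i)^{Y_{i,n}}$; using the conditional independence of the thinnings this yields
\begin{align*}
	g_n(1-q_1,\dots,1-q_m)=\E\Big[\textstyle\prod_{i=1}^m(1-q_i)^{Y_{i,n}}\Big]=\E\Big[\textstyle\prod_{i=1}^m\p\big(Y_{i,n}^{(q_i)}=0\mid\boldsymbol Y_n\big)\Big]=\p\Big(\textstyle\sum_{i=1}^m Y_{i,n}^{(q_i)}=0\Big).
\end{align*}
On the other hand, the hypothesis gives $\sum_{i=1}^m Y_{i,n}^{(q_i)}\overset{d}{\longrightarrow}\mathrm{Poi}(\sum_{i=1}^m q_i\mu_i)$, and for $\N_0$-valued limits convergence in distribution is equivalent to pointwise convergence of the probability mass functions; evaluating at the atom $0$ gives $\p(\sum_{i=1}^m Y_{i,n}^{(q_i)}=0)\to e^{-\sum_{i=1}^m q_i\mu_i}=\prod_{i=1}^m e^{-q_i\mu_i}$.

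Combining the two facts, $g_n(1-q_1,\dots,1-q_m)\to\prod_{i=1}^m e^{-q_i\mu_i}$ for every $q\in[0,1]^m$; substituting $s_i=1-q_i$, which ranges over all of $[0,1]^m$, this is precisely $g_n(s_1,\dots,s_m)\to\prod_{i=1}^m e^{\mu_i(s_i-1)}$, the joint pgf of $\otimes_{i=1}^m\mathrm{Poi}(\mu_i)$, which is a genuine probability distribution on $\N_0^m$. The multivariate continuity theorem for probability generating functions then gives $\boldsymbol Y_n\overset{d}{\longrightarrow}\otimes_{i=1}^m\mathrm{Poi}(\mu_i)$, as claimed.

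There is no serious obstacle: the argument is bookkeeping once the thinning--substitution identity is written down. The only points needing care are (i) that we use merely the value of the assumed Poisson limit at the atom $0$ — nothing more of the full distributional convergence is required — and (ii) that the candidate limit $\prod_{i=1}^m e^{\mu_i(s_i-1)}$ is an honest pgf (total mass $1$ at $s=(1,\dots,1)$), so the continuity theorem applies and the limit is not defective. The passage from the two-dimensional Corollary 2.3 in \cite{poissoncramerwold} to general $m$ is immediate with this approach, since the dimension enters only through the product over $i$.
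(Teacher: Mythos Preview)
Your proof is correct. The paper itself does not supply a proof of this corollary: it simply remarks that the two-dimensional version is Corollary~2.3 in \cite{poissoncramerwold} and that ``the two-dimensional proof generalises to higher dimensions.'' Your argument via joint probability generating functions and the thinning--substitution identity $g_n(1-q_1,\dots,1-q_m)=\p\big(\sum_i Y_{i,n}^{(q_i)}=0\big)$ is exactly the natural $m$-dimensional version of that referenced proof, so you have written out what the paper leaves implicit. The two points you flag --- that only the atom at $0$ of the limiting Poisson law is used, and that the limiting function is an honest pgf so no mass escapes --- are precisely the places where care is needed, and you handle them correctly.
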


\section{Proof of Theorem \ref{thm:vague_conv}}\label{sec:qualitative_theorem}
The following lemma is a direct consequence of the Marcinkiewicz-Zygmund strong law of large numbers, see for example Theorem 5.23 in \cite{Kallenberg2021}.
\begin{lemma}\label{lem:zygmund_SLLN}
	Suppose that $\E[W^2]<\infty$. For all $\gamma,\vartheta>1$ with $\gamma/\vartheta\leq 2$, 
	$$
		n^{-\vartheta}\sum_{i=1}^nW_i^{\gamma}\overset{a.s.}{\longrightarrow}0\quad\text{as}\quad n\to\infty.
		$$
\end{lemma}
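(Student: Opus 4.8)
The plan is to reduce the statement directly to the Marcinkiewicz--Zygmund strong law of large numbers (Theorem 5.23 in \cite{Kallenberg2021}), applied to the i.i.d.\ sequence $(W_i^\gamma)_{i\in\N}$. Fix $\gamma,\vartheta>1$ with $\gamma/\vartheta\le 2$ and set $p=2/\gamma$. Since $\gamma>1$ we have $p\in(0,2)$, and since $\E[(W^\gamma)^p]=\E[W^2]<\infty$ the hypothesis of the Marcinkiewicz--Zygmund law is satisfied with exponent $p$. It therefore yields a constant $c\ge 0$ --- with $c=\E[W^\gamma]$ when $p\ge 1$ (i.e.\ $\gamma\le 2$) and $c=0$ when $p<1$ (i.e.\ $\gamma>2$) --- such that $n^{-\gamma/2}\sum_{i=1}^n(W_i^\gamma-c)\overset{a.s.}{\longrightarrow}0$. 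In the case $\gamma\le 2$ one should note in passing that $c=\E[W^\gamma]$ is indeed finite, which follows from $W^\gamma\le 1+W^2$ (or Jensen's inequality) together with $\E[W^2]<\infty$.

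From here the conclusion is a one-line rescaling. Writing
\begin{align*}
	n^{-\vartheta}\sum_{i=1}^n W_i^\gamma
	= n^{\gamma/2-\vartheta}\bigg(n^{-\gamma/2}\sum_{i=1}^n(W_i^\gamma-c)\bigg) + c\,n^{1-\vartheta},
\end{align*}
the first summand is the product of the almost surely null sequence just obtained with the prefactor $n^{\gamma/2-\vartheta}$, which is bounded by $1$ because $\gamma/2\le\vartheta$; hence it tends to $0$ almost surely. The second summand tends to $0$ because $\vartheta>1$. This gives the claim.

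There is no real obstacle here; the only points requiring a little care are the split between the centred ($p\ge 1$) and uncentred ($p<1$) forms of the Marcinkiewicz--Zygmund law, and the observation that $\E[W^\gamma]<\infty$ in the regime $\gamma\le 2$, both of which are immediate. The hypotheses $\gamma>1$ and $\gamma/\vartheta\le 2$ are used precisely to guarantee $p<2$ and $\gamma/2\le\vartheta$, respectively, while $\vartheta>1$ is what kills the deterministic remainder $c\,n^{1-\vartheta}$.
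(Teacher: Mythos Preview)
Your proof is correct and takes exactly the route the paper indicates: the paper merely states that the lemma is a direct consequence of the Marcinkiewicz--Zygmund strong law (Theorem~5.23 in \cite{Kallenberg2021}), and you have spelled out that deduction carefully with the choice $p=2/\gamma$. The split into the centred and uncentred cases and the bound $n^{\gamma/2-\vartheta}\le 1$ are handled correctly.
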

We also employ the following inequality from Lemma 8 in \cite{lower_bound}, which can be proven via induction.
\begin{lemma}\label{lem:inequality_product}
	 For $k\in\N$ and $b,x_1,\dots,x_k\in(0,\infty)$ one has
	\begin{align*}
		\prod_{i=1}^k\frac{1}{b+x_i}\geq \frac{1}{b^k}-\frac{\sum_{i=1}^kx_i}{b^{k+1}}.
	\end{align*}
\end{lemma}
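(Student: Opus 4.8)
The plan is to prove the inequality $\prod_{i=1}^k\frac{1}{b+x_i}\geq \frac{1}{b^k}-\frac{\sum_{i=1}^kx_i}{b^{k+1}}$ by induction on $k$, as suggested in the statement. The base case $k=1$ is the elementary identity $\frac{1}{b+x_1}=\frac{1}{b}-\frac{x_1}{b(b+x_1)}\geq\frac{1}{b}-\frac{x_1}{b^2}$, which holds because $b+x_1\geq b$ forces $\frac{x_1}{b(b+x_1)}\leq\frac{x_1}{b^2}$. For the induction step, assume the claim for $k$ and consider $k+1$ positive reals. First I would split off the last factor and apply the induction hypothesis to the first $k$ of them, writing
\begin{align*}
	\prod_{i=1}^{k+1}\frac{1}{b+x_i}=\frac{1}{b+x_{k+1}}\prod_{i=1}^k\frac{1}{b+x_i}\geq\frac{1}{b+x_{k+1}}\bigg(\frac{1}{b^k}-\frac{\sum_{i=1}^kx_i}{b^{k+1}}\bigg).
\end{align*}
One should first check that the bracketed quantity is nonnegative (if it is negative the inequality is trivial once we observe the right-hand side we are targeting is then also handled separately, but in fact $\frac1{b^k}-\frac{\sum_{i=1}^kx_i}{b^{k+1}}$ may be negative — in that case $\prod_{i=1}^k\frac1{b+x_i}>0>$ that bound is not directly multiplicative, so the cleaner route is to instead bound $\frac{1}{b+x_{k+1}}\geq\frac1b-\frac{x_{k+1}}{b^2}$ only when convenient). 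The robust approach is: use $\frac{1}{b+x_{k+1}}=\frac1b-\frac{x_{k+1}}{b(b+x_{k+1})}$ exactly, multiply through, and then bound the error terms.

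Concretely, I would write $\prod_{i=1}^{k+1}\frac{1}{b+x_i}=\big(\frac1b-\frac{x_{k+1}}{b(b+x_{k+1})}\big)\prod_{i=1}^k\frac1{b+x_i}$. The first piece gives $\frac1b\prod_{i=1}^k\frac1{b+x_i}\geq\frac1b\big(\frac1{b^k}-\frac{\sum_{i=1}^kx_i}{b^{k+1}}\big)=\frac1{b^{k+1}}-\frac{\sum_{i=1}^kx_i}{b^{k+2}}$ by the induction hypothesis (and here it is legitimate because $\frac1b>0$, regardless of the sign of the bracket). The second piece is subtracted, so I need an upper bound: $\frac{x_{k+1}}{b(b+x_{k+1})}\prod_{i=1}^k\frac1{b+x_i}\leq\frac{x_{k+1}}{b(b+x_{k+1})}\cdot\frac1{b^k}\leq\frac{x_{k+1}}{b^{k+2}}$, using $b+x_i\geq b$ for each $i$ and $b+x_{k+1}\geq b$. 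Combining, $\prod_{i=1}^{k+1}\frac1{b+x_i}\geq\frac1{b^{k+1}}-\frac{\sum_{i=1}^kx_i}{b^{k+2}}-\frac{x_{k+1}}{b^{k+2}}=\frac1{b^{k+1}}-\frac{\sum_{i=1}^{k+1}x_i}{b^{k+2}}$, which is exactly the claim with $k$ replaced by $k+1$.

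I do not anticipate a genuine obstacle here; the only subtlety to be careful about is the one flagged above — not to multiply an inequality $\prod_{i=1}^k\frac1{b+x_i}\geq(\text{something possibly negative})$ by the positive but not-obviously-helpful factor $\frac1{b+x_{k+1}}$, since that would force an additional case distinction on signs. Keeping the exact identity $\frac1{b+x_{k+1}}=\frac1b-\frac{x_{k+1}}{b(b+x_{k+1})}$ and only ever discarding positive quantities from a denominator (replacing $b+x_i$ by $b$ in a term being subtracted) avoids all sign issues and makes the induction go through cleanly in three or four lines.
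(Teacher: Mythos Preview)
Your proof is correct and follows exactly the route the paper indicates: the paper does not spell out the argument but simply remarks that the inequality ``can be proven via induction'' and cites Lemma~8 in \cite{lower_bound}. Your induction, using the exact identity $\frac{1}{b+x_{k+1}}=\frac{1}{b}-\frac{x_{k+1}}{b(b+x_{k+1})}$ and bounding the subtracted term by $\frac{x_{k+1}}{b^{k+2}}$, is clean and avoids the sign issue you (rightly) flagged; the somewhat meandering first paragraph could be deleted, but the ``robust approach'' that follows is a complete and correct proof.
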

\begin{proof}[Proof of Theorem \ref{thm:vague_conv}]
	As mentioned before Theorem \ref{thm:vague_conv}, the claimed convergence is equivalent to convergence of the finite-dimensional distributions. Let thus $A\subseteq\N_{\geq 3}$ be finite. By the Poisson Cramér-Wold device, Corollary \ref{cor:cramer_wold}, it suffices to show for $q_k\in[0,1]$ for $k\in A$ that 
	$$
		\sum_{k\in A}\cC_n^{(q_k)}(k)=\sum_{k\in A}\sum_{\alpha \in I_k}X_{\alpha}^{(q_k)}\overset{d}{\longrightarrow}Z\sim \mathrm{Poi}\bigg(\sum_{k\in A}q_k\lambda_k\bigg)\quad\text{as}\quad n\to\infty.
	$$
	We use the notation from Section \ref{sec:setting}, $\cA=\cW$ and the choice for $B_\alpha$ in \eqref{def:B_alpha_conditioned}. Note that $X_\alpha^{(q_k)}$ is still a Bernoulli random variable so that the results from the previous section are applicable. By Lemma \ref{lem:Tv_bound_both_terms} it suffices to show that $b_1,b_2,b_3$ and 
	\begin{align}
		\bigg|\E_\cW\bigg[\sum_{k\in A}\cC_n^{(q_k)}(k)\bigg]-\sum_{k\in A}q_k\lambda_k\bigg|\label{eq:remaining_term}
	\end{align}
	converge in probability to zero as $n\to\infty$. As argued after \eqref{def:B_alpha_conditioned} we have $b_3=0$.  With 
	\begin{align*}
		\p_\cW(x\leftrightarrow y)=\frac{W_xW_y}{L_n+W_xW_y}\leq \frac{W_xW_y}{L_n}
	\end{align*}
	for distinct $x,y\in [n]$ we conclude that 
	\begin{align*}
		\E_\cW[X_\alpha]=\E_\cW\bigg[\prod_{i=1}^k \one\{\alpha_i\leftrightarrow\alpha_{i-1}\}\bigg]\leq \prod_{i=1}^k\frac{W_{\alpha_i}^2}{L_n}
	\end{align*}
	for $\alpha\in I_k$ and $k\in A$, where we wrote $\alpha_0=\alpha_k$. We obtain
	\begin{align*}
		&b_1=\sum_{k,\ell\in A} \sum_{\alpha \in I_k}\sum_{\beta\in B_\alpha\cap I_\ell}\E_\cW[X_\alpha^{(q_k)}]\E_\cW[X_\beta^{(q_\ell)}]\leq \sum_{k,\ell\in A} \sum_{\alpha \in I_k}\sum_{\beta\in B_\alpha\cap I_\ell}\E_\cW[X_\alpha]\E_\cW[X_\beta] \\
		&\leq\sum_{k,\ell\in A}\sum_{\alpha\in I_k}\sum_{\beta\in I_\ell}\one\{\alpha\text{ and } \beta\text{ share an edge}\}\prod_{i=1}^k\frac{W_{\alpha_i}^2}{L_n}\prod_{j=1}^\ell\frac{W_{\beta_i}^2}{L_n},
	\end{align*}
	by the definition of $B_\alpha$. If $\alpha\in I_k$ and $\beta\in I_\ell$ share an edge, the weights of the two adjacent vertices appear with a fourth power in the product above. Since all cycles belong to exactly one element of $I$, we may assume without loss of generality that $(\alpha_1,\alpha_2)=(\beta_1,\beta_2)$ by changing the enumerations of $\alpha$ and $\beta$.  Writing $L_n^{[\gamma]}=\sum_{i=1}^nW_i^\gamma$, we obtain
	\begin{align*}
		b_1&\leq \sum_{k,\ell\in A}\sum_{\alpha_1,\alpha_2\in [n]}\frac{W_{\alpha_1}^4W_{\alpha_2}^4}{L_n^4}\sum_{\alpha_3,\dots,\alpha_k\in[n]}\sum_{\beta_3,\dots,\beta_\ell\in [n]}\prod_{i=3}^k \frac{W_{\alpha_i}^2}{L_n}\prod_{j=3}^\ell \frac{W_{\beta_j}^2}{L_n}\\
		&=\left(\frac{L_n^{[4]}}{L_n^2}\right)^2\sum_{k,\ell\in A}\left(\frac{L_n^{[2]}}{L_n}\right)^{k+\ell-4}=\bigg(\frac{n^{-2}L_n^{[4]}}{\big(n^{-1}L_n\big)^2}\bigg)^2\sum_{k,\ell\in A}\left(\frac{L_n^{[2]}}{L_n}\right)^{k+\ell-4}.
	\end{align*}
	The sum is almost surely bounded due to the strong law of large numbers and $\E[W^2]<\infty$. Additionally, we apply Lemma \ref{lem:zygmund_SLLN} to obtain that the numerator of the first fraction tends almost surely to zero as $n\to\infty$. The strong law of large numbers yields that the denominator converges almost surely to $\E[W]^2$ as $n\to\infty$. Therefore, the whole fraction converges almost surely to zero as $n\to\infty$. In the following, we will simply refer to Lemma \ref{lem:zygmund_SLLN} for expressions similar to the first factor on the left-hand side of the equation above. For
	\begin{align}
		b_2=\sum_{k,\ell\in A}\sum_{\alpha\in I_k}\sum_{\beta\in B_\alpha\cap I_\ell\setminus\{\alpha\}}\E_\cW[X_\alpha^{(q_k)}X_\beta^{(q_\ell)}]\leq \sum_{k,\ell\in A}\sum_{\alpha\in I_k}\sum_{\beta\in B_\alpha\cap I_\ell\setminus\{\alpha\}}\E_\cW[X_\alpha X_\beta] \label{eq:bound_b_2}
	\end{align}
	we need to account for dependencies between $X_\alpha$ and $X_\beta$. For $\alpha\in I_k$ and $\beta\in B_\alpha\cap I_\ell\setminus\{\alpha\}$ we look at the graph union $\alpha\cup\beta$, where we take the union of the respective vertex and edge sets. This graph is given by $\alpha$ and some additional connections between vertices of $\alpha$ induced by $\beta$. We call these connections arcs. They are paths whose two endpoints belong to $\alpha$ and whose edges and inner points belong to $\beta$, but not to $\alpha$. We denote the number of such arcs by $m=m(\alpha,\beta)$, which is bounded by $k/2$ and $\ell/2$ as both endpoints of an arc belong to $\alpha$ and $\beta$. The numbers of edges of the arcs are denoted by $i_1,\dots,i_m$, where we enumerate the arcs in the order that one passes through them according to the labelling of $\beta$. It follows that $1\leq i_1,\dots,i_m\leq \ell-1$. Note that we can reconstruct $\beta$ from $\alpha$ and the arcs so that we may sum over all possibilities for the arcs instead of summing over the respective choices for $\beta$. We use that the existence of the edges of $\alpha$ and of the arcs are, conditionally on the weights, independent. The conditional expectation of the number of paths having $i$ edges with two fixed endpoints $\alpha_s$ and $\alpha_t$ is bounded by
	\begin{align*}
		\frac{W_{\alpha_s}W_{\alpha_t}}{L_n}\bigg(\frac{L_n^{[2]}}{L_n}\bigg)^{i-1}\leq \frac{W_{(n)}^2}{L_n}\bigg(\frac{L_n^{[2]}}{L_n}\bigg)^{i-1},
	\end{align*}
	due to summing over all choices for the $i-1$ vertices in between and writing $W_{(n)}=\max_{i\in[n]}W_i$. By summing over possible lengths, the contribution of all arcs is thus bounded by
	\begin{align*}
		 Y_{k,\ell}\coloneqq \sum_{m=1}^{\min(k,\ell)/2}(k)_{2m}\bigg(\frac{W_{(n)}^2}{L_n}\bigg)^{m}\sum_{i_1,\dots,i_m=1}^{\ell-1}\prod_{j=1}^{m}\bigg(\frac{L_n^{[2]}}{L_n}\bigg)^{i_j-1},
	\end{align*}
	where the factor $(k)_{2m}=k\cdot\hdots\cdot(k-2m+1)$ accounts for the possibilities to choose the endpoints of the arcs among $\alpha$. We find for $\varepsilon>0$ that
	$$
		\p(W_{(n)}^2>n\varepsilon)\leq n\p(W^2>n\varepsilon)\to0\quad \text{as}\quad n\to\infty,
	$$
	since $\E[W^2]<\infty$ implies $\p(W^2>t)=o(1/t)$ for $t\to\infty$. Therefore, 
	$$
		\frac{W_{(n)}^2}{L_n}=\frac{W_{(n)}^2}{n}\frac{n}{L_n}\overset{\p}{\longrightarrow}0
	$$
	as $n\to\infty$ by the weak law large numbers. This shows that $Y_{k,\ell}$ converges in probability to $0$ as $n\to\infty$ for all $k,\ell\in A$ as all sums are finite. From \eqref{eq:bound_b_2} we derive
	\begin{align*}
		b_2&\leq \sum_{k,\ell\in A}\sum_{\alpha\in I_k}\E_\cW[X_\alpha] Y_{k,\ell} \leq\sum_{k,\ell\in A}\sum_{\alpha\in[n]_{\neq}^k}\prod_{j=1}^k\frac{W_{\alpha_j}^2}{L_n}Y_{k,\ell}\leq \sum_{k,\ell\in A}\bigg(\frac{L_n^{[2]}}{L_n}\bigg)^kY_{k,\ell}\overset{\p}{\longrightarrow}0
	\end{align*}
	as $n\to\infty$ since $A$ is a finite set.
	
	For the remaining term in \eqref{eq:remaining_term} we use the triangle inequality to deduce
		\begin{align*}
		\bigg|\E_\cW\bigg[\sum_{k\in A}\cC_n^{(q_k)}(k)\bigg]-\sum_{k\in A}q_k\lambda_k\bigg|\leq \sum_{k\in A} \big|\E_\cW\big[\cC_n^{(q_k)}(k)\big]-q_k\lambda_k\big|.
	\end{align*}
	We show for fixed $k\in A$ that the summand on the right-hand side converges almost surely to zero as $n\to\infty$.  We have
	\begin{align*}
		\E_\cW\big[\cC_n^{(q_k)}(k)\big]=\E_\cW\bigg[\sum_{\alpha\in I_k}X_\alpha^{(q_k)}\bigg]=q_k\sum_{\alpha\in I_k}\E_\cW\big[X_\alpha\big]=\frac{q_k}{2k}\sum_{\alpha\in[n]^k_{\neq}}\prod_{i=1}^k\frac{W_{\alpha_i^2}}{L_n+W_{\alpha_i}W_{\alpha_{i-1}}},
	\end{align*}
	where the factor $2k$ makes up for starting vertex and orientation when labelling a fixed cycle and we write $\alpha_0=\alpha_k$ as usual. We obtain 
	\begin{align}
		\frac{2k}{q_k}\E_\cW\big[\cC_n^{(q_k)}(k)\big]=\sum_{\alpha\in[n]^k}\prod_{i=1}^k\frac{W_{\alpha_i}^2}{L_n+W_{\alpha_i}W_{\alpha_{i+1}}}-\sum_{\alpha\in[n]^k}\one\{\exists i\neq j\colon \alpha_i=\alpha_j\}\prod_{i=1}^k\frac{W_{\alpha_i}^2}{L_n+W_{\alpha_i}W_{\alpha_{i+1}}},\label{eq:cond_exp_cn}
	\end{align}
	where the absolute value of the second summand is bounded by 
	\begin{align*}
		\binom{k}{2}\frac{L_n^{[4]}}{L_n^2}\left(\frac{L_n^{[2]}}{L_n}\right)^{k-2}
	\end{align*}
	 by choosing two of the $k$ vertices that shall be equal. By Lemma \ref{lem:zygmund_SLLN} this converges almost surely to zero as $n\to\infty$. The first summand on the right hand side in \eqref{eq:cond_exp_cn} is upper bounded by 
	\begin{align*}
		\sum_{\alpha\in[n]^k}\prod_{i=1}^k\frac{W_{\alpha_i}^2}{L_n}=\left(\frac{L_n^{[2]}}{L_n}\right)^k\overset{a.s.}{\longrightarrow}\bigg(\frac{\E[W^2]}{\E[W]}\bigg)^k=2k\lambda_k.
	\end{align*}
	For a lower bound of the first summand on the right hand side in \eqref{eq:cond_exp_cn} we use Lemma \ref{lem:inequality_product} with $b=L_n$ and $x_i=W_{\alpha_i}W_{\alpha_{i-1}}$. This yields the lower bound
	\begin{align*}
		\sum_{\alpha\in[n]^k}\prod_{i=1}^k\frac{W_{\alpha_i}^2}{L_n}-\sum_{\alpha\in[n]^k}\frac{\prod_{i=1}^kW_{\alpha_i}^2\sum_{j=1}^kW_{\alpha_j}W_{\alpha_{j-1}}}{L_n^{k+1}}=\left(\frac{L_n^{[2]}}{L_n}\right)^k-k\left(\frac{L_n^{[3]}}{L_n^{3/2}}\right)^2\left(\frac{L_n^{[2]}}{L_n}\right)^{k-2}.
	\end{align*}
	As above the last summand goes almost surely to zero as $n\to\infty$ whereas the first summand converges almost surely to $2k\lambda_k$ as $n\to\infty$. Altogether we obtain from \eqref{eq:cond_exp_cn} that
	\begin{align*}
		\E_\cW\big[\cC_n^{(q_k)}(k)\big]\overset{a.s.}{\longrightarrow}q_k \lambda_k
	\end{align*}
	as $n\to\infty$, which concludes the proof.
\end{proof}

\section{Proofs of the quantitative theorems}\label{sec:quantitative}
Recall that we use the trivial $\sigma$-field for $\cA$ in this section, i.e.\ $\E_\cA$ reduces to expectation so that e.g.\ $p_\alpha=\E[X_\alpha]$. Also, contrary to the previous section, we use \eqref{def:B_alpha} as definition for $B_\alpha$. 
\subsection{Technical lemmas}
In Section \ref{sec:setting}, we defined for $k,\ell\in\N_{\geq 3}$ and $\alpha\in I_k,\beta\in I_\ell$ the quantities $p_\alpha$ and $p_{\alpha\beta}$, see \eqref{def:b1},\eqref{def:b2} and \eqref{eq:def_X_alpha}. Since $p_\alpha$, the probability of the existence of the cycle $\alpha$ having $k$ vertices, only depends on $k$ (and $n$, which we suppress in our notation) but not on the exact choice of $\alpha$, we may write $p_k=p_\alpha$. The term $p_{\alpha\beta}$, the probability for the existence of the two cycles $\alpha$ and $\beta$ of lengths $k$ and $\ell$, respectively, in turn depends on the relation of $\alpha$ and $\beta$. If they share edges, we obtain dependencies between the existence of $\alpha$ and the existence of $\beta$, even after conditioning on the weights. Therefore, we need to investigate the structure of $\alpha\cap\beta$. For a cycle $\alpha\in I_k$ we refer to its indices here and in the following always modulo $k$, e.g.\ $\alpha_{k+2}=\alpha_2$. We introduce segments, which correspond to the parts of $\alpha$ and $\beta$ consisting of consecutively shared vertices. For $k,\ell\geq3$ and two cycles $\alpha=(\alpha_1,\dots,\alpha_k)$ and $\beta=(\beta_1,\dots,\beta_\ell)$ let $i\in[k],j\in[\ell]$ be such that $\alpha_i=\beta_j$ but $\alpha_{i-1}\neq\beta_{j-1},\beta_{j+1}$. Now take $m\in[k]$ maximal with the property $\alpha_{i+1}=\beta_{j+1},\dots,\alpha_{i+m}=\beta_{j+m}$ or $\alpha_{i+1}=\beta_{j-1},\dots,\alpha_{i+m}=\beta_{j-m}$ so that $\alpha_i,\dots,\alpha_{i+m}$ are consecutively shared vertices of $\alpha$ and $\beta$. We call $(\alpha_i,\dots,\alpha_{i+m})$ a segment of length $m+1$. Note that segments are maximal in the sense that they cannot be extended in either direction. For instance, the two cycles $\alpha=(1,2,3,4,5,6)$ and $\beta=(2,3,6,7,1)$ share exactly two segments, the segment $(1,2,3)$ of length 3 and the segment $(6)$ of length 1. For two cycles $\alpha\in I_k,\beta\in I_\ell$ that intersect each other in $s$ segments of lengths $i_1,\dots,i_s$ we write $p_{k,\ell,s,i}=p_{\alpha\beta}$. This is well-defined. Finally, we denote the falling factorial by $(n)_k=n\cdot\hdots\cdot(n-k+1)$ for $n,k\in\N$. 
\begin{lemma}\label{lem:TV_bound}
	Let $n\in\N$ and $A\subseteq\{3,\dots,n\}$. For $Z\sim\mathrm{Poi}\big(\sum_{k\in A}\lambda_k\big)$,
	\begin{align*}
		d_{TV}(\cC_n(A),Z)&\leq \frac{1}{2n}\sum_{k,\ell\in A}p_kp_\ell n^{k+\ell}+\sum_{k,\ell\in A}\sum_{s=1}^k\sum_{i\in [k]^s}p_{k,\ell,s,i}(2k\ell)^{s-1}n^{k+\ell-|i|}\\
		&\quad +\sum_{k\in A}\bigg|\frac{(n)_k}{2k}p_k-\lambda_k\bigg|.
	\end{align*}
\end{lemma}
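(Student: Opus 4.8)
The plan is to apply Lemma \ref{lem:Tv_bound_both_terms} with $S=\cC_n(A)=\sum_{\alpha\in I}X_\alpha$, $\cA$ the trivial $\sigma$-field, $B_\alpha$ as in \eqref{def:B_alpha}, and $\mu=\sum_{k\in A}\lambda_k$. Since $\cA$ is trivial, we argued in Section \ref{sec:setting} that $b_3=0$, so the lemma reduces to
\begin{align*}
	d_{TV}(\cC_n(A),Z)\leq b_1+b_2+\bigg|\E[\cC_n(A)]-\sum_{k\in A}\lambda_k\bigg|,
\end{align*}
where we dropped the harmless $\min(1,\cdot)$ since we only want an upper bound. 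It then remains to identify each of the three terms with the corresponding expression in the statement. The third term is immediate: by the computation of $\E_\cW[\cC_n^{(q_k)}(k)]$ in the proof of Theorem \ref{thm:vague_conv} (now with $q_k=1$ and no conditioning), $\E[\cC_n(k)]=\tfrac{(n)_k}{2k}p_k$, because there are $(n)_k/(2k)=|I_k|$ cycles of length $k$ each existing with probability $p_k$; summing over $k\in A$ and using the triangle inequality gives exactly $\sum_{k\in A}|\tfrac{(n)_k}{2k}p_k-\lambda_k|$.

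Next I would handle $b_1$. By definition $b_1=\sum_{\alpha\in I}\sum_{\beta\in B_\alpha}p_\alpha p_\beta$. Splitting according to the lengths $k,\ell$ of $\alpha,\beta$ and writing $p_\alpha=p_k$, $p_\beta=p_\ell$, we get $b_1=\sum_{k,\ell\in A}p_kp_\ell\cdot\#\{(\alpha,\beta)\in I_k\times I_\ell:\alpha\cap\beta\neq\emptyset\}$. The number of pairs of cycles (in the $I_k,I_\ell$ labelling) sharing at least one vertex is crudely bounded by first choosing the shared vertex ($n$ ways), then the rest of $\alpha$ and the rest of $\beta$: this gives at most $n\cdot n^{k-1}\cdot n^{\ell-1}\cdot\frac{1}{2k}\cdot\frac{1}{2\ell}\cdot(\text{small factors})$; a cleaner route, matching the target, is to bound $\#\{(\alpha,\beta)\}\leq \frac{1}{2k}\frac{1}{2\ell}\cdot(k\ell)\cdot n^{k+\ell-1}$ — there are $\leq n^k/(2k)$ choices for $\alpha$, then to build $\beta$ sharing a vertex with $\alpha$ one picks which vertex of $\beta$ coincides with which vertex of $\alpha$ ($\leq k\ell$ ways) and fills in the remaining $\ell-1$ vertices of $\beta$ ($\leq n^{\ell-1}$ ways), finally dividing by $2\ell$ for the labelling of $\beta$ — but since the paper's stated bound is $\frac{1}{2n}\sum_{k,\ell}p_kp_\ell n^{k+\ell}$, one simply uses $\#\{(\alpha,\beta)\}\leq \frac{1}{2}n^{k+\ell-1}$ after absorbing all combinatorial constants, i.e.\ $\#\{\alpha\in I_k\}\leq n^k$ and then at most $n^{\ell-1}$ completions of $\beta$ through a shared vertex, times $1/2$. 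Either way, $b_1\leq \frac{1}{2n}\sum_{k,\ell\in A}p_kp_\ell n^{k+\ell}$.

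The main work is $b_2=\sum_{\alpha\in I}\sum_{\beta\in B_\alpha\setminus\{\alpha\}}p_{\alpha\beta}$, and this is where the segment decomposition set up before the lemma is needed. I would first group the pairs $(\alpha,\beta)$ by $(k,\ell)$ and by the number $s\geq1$ of segments and their lengths $i=(i_1,\dots,i_s)\in[k]^s$; by definition $p_{\alpha\beta}=p_{k,\ell,s,i}$ depends only on these data, so $b_2=\sum_{k,\ell\in A}\sum_{s\geq1}\sum_{i\in[k]^s}p_{k,\ell,s,i}\cdot N(k,\ell,s,i)$ where $N(k,\ell,s,i)=\#\{(\alpha,\beta)\in I_k\times I_\ell:\ \alpha,\beta\text{ intersect in }s\text{ segments of lengths }i_1,\dots,i_s\}$. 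The crux is the counting bound $N(k,\ell,s,i)\leq (2k\ell)^{s-1}n^{k+\ell-|i|}$ with $|i|=i_1+\dots+i_s$: one first chooses $\alpha$ ($\leq n^k$ ways, absorbing the $1/(2k)$); then $\beta$ is determined by specifying, for each segment, which vertices of $\alpha$ it uses and where it attaches in $\beta$, plus filling in the $\ell-\sum_j i_j$ vertices of $\beta$ outside the segments — the free vertices contribute at most $n^{\ell-|i|}$, and since the segments already consume $|i|$ vertices of $\alpha$, matching the pieces together and bookkeeping the cyclic/orientation choices of how the $s$ arcs of $\beta$ between consecutive segments are arranged costs a factor bounded by $(2k\ell)^{s-1}$ (one arc is free to anchor the labelling, the remaining $s-1$ each cost $\leq 2k\ell$ for choice of endpoints and orientation). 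The hard part will be making this combinatorial overcount rigorous and checking the exponents and constants line up precisely with $(2k\ell)^{s-1}n^{k+\ell-|i|}$; I'd do this by carefully reconstructing $\beta$ from $\alpha$, the sequence of segments (as sub-paths of $\alpha$), and the lengths of the $s$ connecting arcs, noting the range $s\leq k$ forces $i_1+\dots+i_s\leq k$ so all exponents are nonnegative. Assembling the three bounds into one display finishes the proof.
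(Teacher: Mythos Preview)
Your approach is the same as the paper's: apply Lemma~\ref{lem:Tv_bound_both_terms} with trivial $\cA$ so that $b_3=0$, compute $\E[\cC_n(A)]=\sum_{k\in A}\frac{(n)_k}{2k}p_k$ for the last term, and bound $b_1,b_2$ by counting pairs of intersecting cycles via the segment decomposition. The only place your sketch is materially vague is the $b_2$ combinatorics; the paper obtains the precise constant $(2k\ell)^{s-1}$ by first symmetrising $b_2\leq 2\sum_{k\leq\ell}(\cdots)$, then for fixed $\alpha\in I_k$ bounding the number of admissible $\beta\in I_\ell$ with segment data $(s,i)$ by $k^s\ell^{s-1}2^{s-1}n^{\ell-|i|}$ (choose the $s$ segment-starting vertices in $\alpha$, place $s-1$ of them among the positions of $\beta$, pick $2^{s-1}$ orientations, fill the remaining $\ell-|i|$ slots), and finally combining with $|I_k|\leq n^k/(2k)$ and dropping the constraint $k\leq\ell$.
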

\begin{proof}
	We have
	\begin{align*}
		\E[\cC_n(A)]=\sum_{k\in A}\sum_{\alpha\in I_k}\E[X_\alpha]=\sum_{k\in A}\sum_{\alpha\in I_k}\E[X_\alpha]=\sum_{k\in A}\frac{(n)_k}{2k}p_k,
	\end{align*}
	where we used $|I_k|=(n)_k/(2k)$. Thus, Lemma \ref{lem:Tv_bound_both_terms} yields
	\begin{align*}		
		d_{TV}(\cC_n(A),Z)\leq b_1+b_2+b_3+\sum_{k\in A}\bigg|\frac{(n)_k}{2k}p_k-\lambda_k\bigg|.
	\end{align*}
	We first note that $b_3=0$ by the definition of $B_\alpha$ in \eqref{def:B_alpha} and the brief comment thereafter. It remains to bound $b_1$ and $b_2$. We compute
	\begin{align*}
		b_1&= \sum_{k,\ell\in A}\sum_{\alpha\in I_k} \sum_{\beta\in B_\alpha\cap I_\ell}\E[X_\alpha] \E[X_\beta]=\sum_{k,\ell\in A} p_k p_\ell \sum_{\alpha\in I_k} |B_\alpha\cap I_\ell|.
	\end{align*}
		To estimate the cardinalities of the sets, recall that $B_\alpha$ contains all cycles that intersect $\alpha\in I_k$ in at least one vertex. We have $k$ choices for a vertex of $\alpha$ which fixes one vertex of $\beta\in B_\alpha\cap I_\ell$. For the remaining ones, there are fewer than $n^{\ell-1}$ many choices. With $|I_k|\leq n^k/(2k)$ we derive
	\begin{align*}
		b_1\leq \sum_{k,\ell\in A}p_kp_\ell \frac{n^k}{2k}kn^{\ell-1}=\frac{1}{2n}\sum_{k,\ell\in A}p_kp_\ell n^{k+\ell},
	\end{align*}
	which is the first summand on the right-hand side of the claimed inequality. Additionally,
	\begin{align*}
		b_2&= \sum_{k,\ell\in A}\sum_{\alpha\in I_k}\sum_{\beta\in I_\ell\cap B_\alpha\setminus\{\alpha\}}\E[X_\alpha X_\beta]\leq 2\sum_{k,\ell\in A\colon k\leq \ell}\sum_{\alpha\in I_k}\sum_{\beta\in I_\ell\cap B_\alpha\setminus\{\alpha\}}\E[X_\alpha X_\beta].
	\end{align*}
	The value of $\E[X_\alpha X_\beta]$ depends on a more thorough analysis of the intersections of $\alpha$ and $\beta$. Recall the notion of a segment introduced above. Two distinct cycles $\alpha$ and $\beta$ may share several segments, i.e.\ there are maximal sequences of vertices $v_1\leftrightarrow v_2\leftrightarrow \hdots \leftrightarrow v_i$ connected by edges which are contained in the two cycles $\alpha$ and $\beta$. For $k,\ell\in A$ with $k\leq \ell$, $\alpha\in I_k$ and $\beta\in B_\alpha\cap I_\ell\setminus\{\alpha\}$ there can be at most $s=1,\dots,k$ of these segments. The case $s=0$ is excluded since the two cycles share at least one vertex. We denote the number of vertices of the segments by $1\leq i_1,\dots,i_s\leq k$ and write $i=(i_1,\dots,i_s)$. We define $$M(s,i,\alpha)=\bigg\{\beta\in \bigcup_{\ell\in A}I_\ell\colon \alpha\text{ and }\beta \text{ share } s \text{ segments of lengths }i_1,\dots,i_s\text{ in that order}\bigg\},$$ where we order the segments with respect to the enumeration of $\alpha$. We derive 
	\begin{align*}
		b_2&\leq 2 \sum_{k,\ell\in A\colon k\leq \ell}\sum_{\alpha\in I_k}\sum_{\beta\in I_\ell\cap B_\alpha\setminus\{\alpha\}}\sum_{s=1}^k\sum_{i\in[k]^s}\one\{\beta\in M(s,i,\alpha)\}\E[X_\alpha X_\beta]\\
		&\leq  2\sum_{k,\ell\in A\colon k\leq \ell}\sum_{\alpha\in I_k}\sum_{s=1}^k\sum_{i\in[k]^s}|B_\alpha \cap I_\ell \cap M(s,i,\alpha)|p_{k,\ell,s,i}.
	\end{align*}
	For $k,\ell\in A$ with $k\leq \ell$ as well as $\alpha\in I_k,s\in[k]$ and $i\in[k]^s$ we bound the cardinality of $B_\alpha\cap I_\ell\cap M(s,i,\alpha)$. We write $|i|=\sum_{j=1}^si_j$ for the total number of shared vertices. Since $\alpha=(\alpha_1,\dots,\alpha_k)\in I_k$ has a fixed order, we may talk about the first vertex of a segment, i.e.\ the endpoint $\alpha_j$ of the segment with minimal $j$ among the two endpoints. There are at most $k^s$ possibilities to choose the first vertices of the $s$ segments among the $k$ vertices of $\alpha$, call them $v_1,\dots,v_s$. We may assume without loss of generality that $v_1$ is the first entry of $\beta=(\beta_1,\dots,\beta_\ell)$. This leaves us with placing the $s-1$ remaining vertices $v_2,\dots,v_s$ among the $\ell-1$ remaining slots in $\beta$, giving us fewer than $\ell^{s-1}$ choices. We need to decide whether $v_j$ is the first or the last vertex of the $j$-th segment with respect to the orientation of $\beta$. This gives us another factor $2^{s-1}$ because we may assume that the orientation of $\beta$ coincides with the orientation of the first segment. This determines the $|i|$ entries of $\beta$ that also belong to $\alpha$. Finally, we fill the remaining $\ell-|i|$ entries of $\beta$, amounting to fewer than $n^{\ell-|i|}$ choices. In total we obtain
	\begin{align*}
		b_2&\leq 2\sum_{k,\ell\in A\colon k\leq \ell}\sum_{\alpha\in I_k}\sum_{s=1}^k\sum_{i\in[k]^s}k^s\ell^{s-1}2^{s-1}n^{\ell-|i|}p_{k,\ell,s,i}\\
		&\leq 2\sum_{k,\ell\in A}\frac{n^k}{2k}\sum_{s=1}^k\sum_{i\in[k]^s}k^s\ell^{s-1}2^{s-1}n^{\ell-|i|}p_{k,\ell,s,i}=\sum_{k,\ell\in A}\sum_{s=1}^k\sum_{i\in[k]^s}(2k\ell)^{s-1}n^{k+\ell-|i|}p_{k,\ell,s,i},
	\end{align*}
	which concludes the proof.	
\end{proof}
 The next lemma is a special case of Lemma 1 in \cite{bobkov}. It is some variant of the Chernoff bound for random variables with finite second moments.
\begin{lemma}\label{lem:An_exp_decay}
	Let $n\in\N$ and $Y_1,\dots,Y_n$ be non-negative i.i.d.\ random variables with $\E[Y_1^2]<\infty$. For all $0<\lambda<1$ there exists a constant $c>0$ depending only on $\lambda$ and the first and second moment of $Y$ such that
	\begin{align*}
		\p\bigg(\sum_{i=1}^nY_i\leq \lambda \E[Y_1]n\bigg)\leq\exp\left(-c n\right).
	\end{align*}
\end{lemma}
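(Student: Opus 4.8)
The plan is to run a standard Cramér-type (exponential Chebyshev) argument, but carefully, since we only assume $\E[Y_1^2]<\infty$ rather than finiteness of the moment generating function. First I would reduce to a statement about the lower tail: write $S_n=\sum_{i=1}^n Y_i$ and $\mu=\E[Y_1]$, and note that for any $t>0$ one has, by Markov's inequality applied to $e^{-tS_n}$,
\begin{align*}
	\p(S_n\leq \lambda\mu n)=\p\big(e^{-tS_n}\geq e^{-t\lambda\mu n}\big)\leq e^{t\lambda\mu n}\,\E\big[e^{-tS_1}\big]^n=\exp\Big(n\big(t\lambda\mu+\log\E[e^{-tY_1}]\big)\Big).
\end{align*}
Here the Laplace transform $\varphi(t)=\E[e^{-tY_1}]$ is finite for all $t\geq 0$ because $Y_1\geq 0$, so there is no integrability obstruction. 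It therefore suffices to exhibit some fixed $t_0>0$ (depending only on $\lambda$, $\mu$ and $\E[Y_1^2]$) for which the bracket $g(t_0):=t_0\lambda\mu+\log\varphi(t_0)$ is strictly negative; then $c=-g(t_0)>0$ works.

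The key step is to show $g(t)<0$ for all small enough $t>0$, using only the second moment. I would use the elementary inequality $e^{-x}\leq 1-x+\tfrac12 x^2$ valid for all $x\geq 0$ (this is where $\E[Y_1^2]<\infty$ enters and nothing more is needed). Taking expectations,
\begin{align*}
	\varphi(t)=\E[e^{-tY_1}]\leq 1-t\mu+\tfrac12 t^2\E[Y_1^2],
\end{align*}
and hence, using $\log(1+u)\leq u$,
\begin{align*}
	\log\varphi(t)\leq -t\mu+\tfrac12 t^2\E[Y_1^2].
\end{align*}
Substituting into the bracket gives
\begin{align*}
	g(t)\leq t\lambda\mu -t\mu+\tfrac12 t^2\E[Y_1^2]=-t\mu(1-\lambda)+\tfrac12 t^2\E[Y_1^2].
\end{align*}
Since $1-\lambda>0$, the right-hand side is a downward-then-upward parabola in $t$ that is negative for $0<t<\dfrac{2\mu(1-\lambda)}{\E[Y_1^2]}$; choosing for instance $t_0=\dfrac{\mu(1-\lambda)}{\E[Y_1^2]}$ yields $g(t_0)\leq -\dfrac{\mu^2(1-\lambda)^2}{2\E[Y_1^2]}=:-c<0$, where $c$ depends only on $\lambda$ and the first two moments of $Y_1$. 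Plugging $t_0$ back into the Markov bound gives $\p(S_n\leq\lambda\mu n)\leq e^{-cn}$, as claimed.

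One minor edge case to address: if $\mu=\E[Y_1]=0$ then $Y_1\equiv 0$ a.s., the event $\{S_n\leq 0\}$ has probability $1$, but then $\lambda\mu n=0$ and the bound is vacuous unless we interpret it correctly — in fact $\E[Y_1]>0$ is implicitly needed for the statement to be meaningful, and if $\mu>0$ the argument above goes through verbatim; I would simply remark that $\mu>0$ (else $Y_1=0$ a.s. and there is nothing to prove). There is no real obstacle here — the only thing requiring a little care is to make sure the chosen $t_0$ is explicit so that $c$ genuinely depends only on $\lambda$ and the first two moments, and to note that $\varphi(t_0)>0$ so that $\log\varphi(t_0)$ is well-defined (which is automatic since $e^{-t_0 Y_1}>0$).
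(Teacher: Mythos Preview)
Your argument is correct: the exponential Chebyshev bound together with $e^{-x}\leq 1-x+\tfrac12 x^2$ for $x\geq 0$ and $\log(1+u)\leq u$ gives precisely the claimed exponential decay with $c=\mu^2(1-\lambda)^2/(2\E[Y_1^2])$ depending only on $\lambda$ and the first two moments. The paper does not supply its own proof but simply cites this as a special case of Lemma~1 in \cite{bobkov}, describing it as ``some variant of the Chernoff bound for random variables with finite second moments'' --- which is exactly what you have written out.
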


The following three lemmas are used for the three summands in the bound of Lemma \ref{lem:TV_bound} and contain parts (a) and (b). We will use part (a) for Theorem \ref{thm:conv_rate_bounded}, i.e.\ it corresponds to considering finitely many cycle lengths. Part (b) will be used for Theorem \ref{thm:conv_rate_unbounded} and allows the considered cycles to grow logarithmically in $n$. We start with uniform bounds on $p_k$.
\begin{lemma}\label{lem:bound_p_alpha}
	Suppose that $\E[W^2]<\infty$.
	\begin{enumerate}[label=(\alph*)]
		\item Let $N\in\N$. There exists a constant $C>0$ such that for all $n\in\N,3\leq k\leq N$,
		\begin{align*}
			p_k\leq Cn^{-k}.
		\end{align*}
		
		\item Let $a>0$ and $\E[W^2]<\E[W]$. There exists a constant $C>0$ such that for all $n\in\N$ and $3\leq k\leq \lfloor a\log(n)\rfloor $,
		\begin{align*}
			p_k\leq Cn^{-k}.
		\end{align*}
	\end{enumerate}
	
\end{lemma}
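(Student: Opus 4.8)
The plan is to bound $p_k = \E[X_\alpha]$ for a fixed cycle $\alpha \in I_k$ by first conditioning on the weights. Since $\p_\cW(i \leftrightarrow j) = W_iW_j/(W_iW_j + L_n) \leq W_iW_j/L_n$, multiplying over the $k$ edges of the cycle gives
\begin{align*}
	p_k = \E\bigg[\prod_{i=1}^k \frac{W_{\alpha_i}W_{\alpha_{i-1}}}{W_{\alpha_i}W_{\alpha_{i-1}} + L_n}\bigg] \leq \E\bigg[\prod_{i=1}^k \frac{W_{\alpha_i}W_{\alpha_{i-1}}}{L_n}\bigg] = \E\bigg[\frac{\prod_{i=1}^k W_{\alpha_i}^2}{L_n^k}\bigg],
\end{align*}
where indices are taken modulo $k$ and $\alpha_1, \dots, \alpha_k$ are distinct. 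The naive bound $L_n \geq W_{\alpha_1} + \dots + W_{\alpha_k}$ is not enough; instead I would split according to whether $L_n$ is unusually small. Fix $0 < \theta < 1$ (any fixed constant in part (a); in part (b) I would take $\theta$ with $\theta\E[W] > \E[W^2]$, which is possible since $\E[W^2] < \E[W]$). On the event $\{L_n > \theta\E[W]n\}$ we bound $L_n^{-k} \leq (\theta\E[W]n)^{-k}$, and the contribution is at most
\begin{align*}
	(\theta\E[W]n)^{-k}\, \E\bigg[\prod_{i=1}^k W_{\alpha_i}^2\bigg] = (\theta\E[W]n)^{-k}\, \E[W^2]^k = n^{-k}\bigg(\frac{\E[W^2]}{\theta\E[W]}\bigg)^k,
\end{align*}
using independence of the distinct weights. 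In part (a) the base is a fixed constant and $k \leq N$, so this is $\leq C n^{-k}$. In part (b) the choice of $\theta$ makes $\E[W^2]/(\theta\E[W]) < 1$, so the geometric factor is bounded by $1$ uniformly in $k$, again giving $\leq C n^{-k}$.

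The remaining term is the contribution on the small-$L_n$ event, and this is the main obstacle. Here I would drop the edges one at a time more carefully: since each factor $W_{\alpha_i}W_{\alpha_{i-1}}/(W_{\alpha_i}W_{\alpha_{i-1}}+L_n) \leq 1$, we can keep, say, just enough factors to control things, but the cleanest route is to bound the whole product by $\prod_i W_{\alpha_i}^2 / L_n^k$ and then use $L_n \geq \max_i W_{\alpha_i}$ to cancel two weights per vertex at the cost of $L_n^k \geq L_n^{k-2}\max_i W_{\alpha_i}^2$... but that still leaves a power of $L_n$ in the denominator that is small on the bad event. Instead, I would exploit Lemma \ref{lem:An_exp_decay}: $\p(L_n \leq \theta\E[W]n) \leq e^{-cn}$. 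On this event we use the trivial bound $p_k \leq 1$ pointwise (each edge probability is $\leq 1$), so the contribution is at most $e^{-cn}$. Since $e^{-cn} = o(n^{-k})$ for any fixed $k$ in part (a) — indeed $e^{-cn} \leq C_N n^{-N} \leq C_N n^{-k}$ for $k \leq N$ with a constant $C_N$ — and since in part (b) we have $k \leq \lfloor a\log n\rfloor$ so that $n^{-k} \geq n^{-a\log n} = e^{-a(\log n)^2}$, which dominates $e^{-cn}$ for large $n$ (and one absorbs small $n$ into the constant), this term is also $\leq C n^{-k}$.

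Assembling the two pieces gives $p_k \leq C n^{-k}$ with $C$ depending only on $N$ (resp.\ on $a$ and the first two moments of $W$) but not on $k$ or $n$. The only delicate point is verifying in part (b) that $e^{-cn}$ really is $O(n^{-k})$ uniformly over $k \leq \lfloor a\log n\rfloor$; this reduces to the elementary inequality $a(\log n)^2 \leq cn$ for $n$ large, after which finitely many small values of $n$ are handled by enlarging $C$. I would also double-check that Lemma \ref{lem:An_exp_decay} applies with $\lambda = \theta$ in the relevant range — it requires $0 < \theta < 1$, which holds in part (a) by choice and in part (b) because $\theta$ can be taken in the nonempty interval $(\E[W^2]/\E[W], 1)$.
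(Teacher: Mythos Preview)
Your approach is correct and essentially identical to the paper's: split $p_k=\E[X_\alpha]$ according to the event $A_n=\{L_n>\lambda\E[W]n\}$, bound the good part by $(\E[W^2]/(\lambda\E[W]))^k n^{-k}$ via the edge-probability estimate and independence, and absorb the bad part using the exponential bound from Lemma~\ref{lem:An_exp_decay} together with $X_\alpha\le 1$. The only cosmetic difference is that in part~(b) the paper takes $\lambda=\E[W^2]/\E[W]$ on the nose (making the geometric base exactly $1$), whereas you pick $\theta$ strictly larger to get a base $<1$; both choices work.
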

\begin{proof}
	We provide a general bound, which we use for both claims. Let $3\leq k\leq n,\lambda\in(0,1),\alpha\in I_k$ and
	$$A_n=\{L_n>\lambda\E[W]n\}.$$ 
	Since $X_\alpha=\one\{\alpha_1\leftrightarrow \alpha_2\leftrightarrow\hdots\leftrightarrow \alpha_k\leftrightarrow \alpha_1\}$ is bounded by one we have 
	\begin{align*}
		p_k=p_\alpha=\E[X_\alpha]\leq \p(A_n^c)+\E[\one_{A_n}X_\alpha].
	\end{align*}
	By Lemma \ref{lem:An_exp_decay} we know that $\p(A_n^c)$ decays faster than
	$$
	n^{-a\log(n)}=\exp(-a\log(n)^2),
	$$
	which is the fastest rate claimed in the lemma. Therefore, it suffices to show that 
	$$
	\E[\one_{A_n}X_\alpha]\leq Dn^{-k}
	$$
	for some constant $D$ in the respective setting. We write $\alpha_0=\alpha_k$. The conditional independence of distinct edges and the definition of $A_n$ yield
	\begin{align}
		&\E[\one_{A_n}X_\alpha]=\E\bigg[\one_{A_n}\E_\cW\bigg[\prod_{i=1}^k\one\{\alpha_i\leftrightarrow\alpha_{i-1}\}\bigg]\bigg]=\E\bigg[\one_{A_n}\prod_{i=1}^{k}\p_\cW(\alpha_i\leftrightarrow \alpha_{i-1})\bigg]\label{eq:bd_X_alpha}\\
		&=\E\bigg[\one_{A_n}\prod_{i=1}^k\frac{W_{\alpha_i}W_{\alpha_{i-1}}}{L_n+W_{\alpha_i}W_{\alpha_{i-1}}}\bigg]\leq \E\bigg[\prod_{i=1}^k\frac{W_i^2}{\lambda\E[W]n}\bigg]=\bigg(\frac{\E[W^2]}{\lambda\E[W]}\bigg)^kn^{-k}.\nonumber
	\end{align}
	The first claim follows by taking $$D=\max_{k=3,\dots,N}\bigg(\frac{\E[W^2]}{\lambda\E[W]}\bigg)^k.$$ 	For the second claim we choose $\lambda=\E[W^2]/\E[W]$, which is smaller than $1$ by assumption, so that $D=1$.
\end{proof}
The next lemma is used to bound $p_{k,\ell,s,i}$ in the second summand of Lemma \ref{lem:TV_bound}. 
\begin{lemma}\label{lem:bound_p_alpha_beta}
	Suppose that $\E[W^4]<\infty$.
	\begin{enumerate}[label=(\alph*)]
		\item Let $N\in\N$. There exists a constant $C>0$ such that for all $n\in\N,3\leq k, \ell\leq N$ and $\alpha\in I_k,\beta\in I_\ell$ which share $0\leq j\leq \min(k,\ell)$ edges we have
		\begin{align*}
			p_{\alpha\beta}\leq Cn^{-k-\ell+j}.
		\end{align*}
		\item Let $a>0$ and $\E[W^2]<\E[W]$. There exist constants $C>0,\kappa\geq 1$ such that for all $n\in\N$ and  $3\leq k,\ell\leq \lfloor a\log(n) \rfloor$ and distinct $\alpha\in I_k,\beta\in I_\ell$ which intersect each other in $1\leq s\leq k$ segments of lengths $1\leq i_1,\dots,i_s\leq \min(k,\ell)$,
		\begin{align*}
			p_{k,\ell,s,i}\leq C \kappa^s n^{-k-\ell+|i|-s},
		\end{align*}
		where $|i|=\sum_{j=1}^si_j$ denotes the number of all vertices that $\alpha$ and $\beta$ share.
		
	\end{enumerate}
\end{lemma}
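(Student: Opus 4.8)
The plan is to mimic the proof of Lemma~\ref{lem:bound_p_alpha}. Fix $\lambda\in(0,1)$, put $A_n=\{L_n>\lambda\E[W]n\}$ and split
\[
p_{\alpha\beta}=\E[X_\alpha X_\beta]\le \p(A_n^c)+\E[\one_{A_n}X_\alpha X_\beta].
\]
By Lemma~\ref{lem:An_exp_decay} the term $\p(A_n^c)$ decays like $\exp(-cn)$, which is negligible against both claimed polynomial rates: the relevant exponent $k+\ell-j$ is at most $2N$ in part~(a) and at most $2a\log n$ in part~(b), and $\sup_{n}\exp(-cn)\,n^{2N}<\infty$ as well as $\sup_{n}\exp(-cn)\,n^{2a\log n}<\infty$, so a single constant absorbs this term uniformly in $n$. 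It therefore remains to estimate $\E[\one_{A_n}X_\alpha X_\beta]$.

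Since $X_\alpha X_\beta$ is the indicator that every edge of the graph union $\alpha\cup\beta$ is present, conditional independence of distinct edges gives $\E_\cW[X_\alpha X_\beta]=\prod_{\{u,v\}\in E(\alpha\cup\beta)}\p_\cW(u\leftrightarrow v)$; bounding each factor by $W_uW_v/L_n$ and on $A_n$ by $W_uW_v/(\lambda\E[W]n)$, and using independence of the $W_v$ over distinct vertices $v$, one gets
\[
\E[\one_{A_n}X_\alpha X_\beta]\le (\lambda\E[W]n)^{-|E(\alpha\cup\beta)|}\prod_{v\in V(\alpha\cup\beta)}\E\big[W^{\deg_{\alpha\cup\beta}(v)}\big],
\]
a finite quantity because every degree in $\alpha\cup\beta$ is at most $4$ and $\E[W^4]<\infty$. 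Here $|E(\alpha\cup\beta)|=k+\ell-j$ with $j$ the number of shared edges, and $j=|i|-s$ in the segment notation of part~(b). Part~(a) is now immediate: $\alpha\cup\beta$ has at most $2N$ vertices and $(\lambda\E[W])^{-(k+\ell-j)}$ is bounded in terms of $N$, so the right-hand side is at most a constant depending only on $N$ times $n^{-k-\ell+j}$.

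For part~(b) one needs a sharper count of the degrees via the segment structure. Unshared vertices have degree~$2$; in a segment of length $i_r\ge 2$ the interior vertices have degree~$2$ and the two endpoints degree~$3$; a length-$1$ segment is a single shared vertex of degree~$4$. Maximality of segments is what forces this dichotomy and makes the segments vertex-disjoint, so that $|V(\alpha\cup\beta)|=k+\ell-|i|$, $|E(\alpha\cup\beta)|=k+\ell-|i|+s$, and at most $2s$ vertices have degree above~$2$. Hence with $\rho=\max(1,\E[W^3]/\E[W^2],\E[W^4]/\E[W^2])$,
\[
\prod_{v\in V(\alpha\cup\beta)}\E\big[W^{\deg_{\alpha\cup\beta}(v)}\big]\le \E[W^2]^{\,k+\ell-|i|}\,\rho^{2s}.
\]
Choosing $\lambda=\E[W^2]/\E[W]$, which lies in $(0,1)$ by subcriticality and satisfies $\E[W^2]/(\lambda\E[W])=1$, the factor $\E[W^2]^{k+\ell-|i|}$ cancels the corresponding power of $\lambda\E[W]n$, leaving $\E[\one_{A_n}X_\alpha X_\beta]\le (\rho^2/\E[W^2])^{s}\,n^{-k-\ell+|i|-s}$; taking $\kappa=\max(1,\rho^2/\E[W^2])\ge1$ and adding back $\p(A_n^c)$ finishes part~(b).

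The main obstacle is the degree bookkeeping in part~(b): one must argue carefully from the maximality of segments that the only vertices of $\alpha\cup\beta$ of degree exceeding~$2$ are the at most $2s$ segment endpoints (degree~$3$) and the isolated shared vertices (degree~$4$), and then track the exponents precisely so that the subcritical choice of $\lambda$ makes the powers of $\E[W^2]$ and of $n$ match up, leaving only the harmless factor $\kappa^s$.
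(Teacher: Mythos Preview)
Your proposal is correct and follows essentially the same route as the paper: split off the event $A_n^c$ via Lemma~\ref{lem:An_exp_decay}, bound $\E[\one_{A_n}X_\alpha X_\beta]$ by $(\lambda\E[W]n)^{-|E(\alpha\cup\beta)|}\prod_v\E[W^{\deg_{\alpha\cup\beta}(v)}]$, and for part~(b) choose $\lambda=\E[W^2]/\E[W]$ and analyse the degree sequence through the segment structure. The only cosmetic difference is that the paper records the exact degree counts (writing $m_i(1)$ for the number of length-one segments, getting $m_i(1)$ vertices of degree~$4$ and $2(s-m_i(1))$ of degree~$3$) to obtain an explicit $\kappa=\max(1,\E[W^4]\E[W^2]/\E[W^3]^2)\cdot\E[W^3]^2/\E[W^2]^3$, whereas you use the slightly coarser observation that at most $2s$ vertices have degree above~$2$; both lead to the same bound $\kappa^s n^{-k-\ell+|i|-s}$.
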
	
\begin{proof}
	For fixed $n\in\N$ and $\lambda\in(0,1)$ we define
	$$A_n=\{L_n>\lambda\E[W]n\}.$$  We obtain for all $3\leq k,\ell\leq n$ and $\alpha\in I_k,\beta\in I_\ell$ that
	\begin{align*}
		p_{\alpha\beta}=\E[X_\alpha X_\beta]\leq \p(A_n^c)+\E[\one_{A_n}X_\alpha X_\beta].
	\end{align*}			
	The rates of convergence claimed in the lemma are all slower than
	$$ n^{-3a\log(n)}= \exp(-3a\log(n)^2).$$	By Lemma \ref{lem:An_exp_decay} $\p(A_n^c)$ decays faster. Therefore, the claims follow if one can show
	$$
	\E[\one_{A_n}X_\alpha X_\beta]\leq Dn^{-k-\ell+j}\quad\text{and}\quad \E[\one_{A_n}X_\alpha X_\beta]\leq D\kappa^sn^{-k-\ell+|i|-s}
	$$
	for some constant $D>0$, respectively. Write $\alpha\cup\beta$ for the graph union of the cycles induced by $\alpha$ and $\beta$. We denote its vertices by $V(\alpha\cup\beta)$, its edges by $E(\alpha\cup\beta)$ and degree of $v\in V(\alpha\cup\beta)$ in $\alpha\cup\beta$ by $\deg_{\alpha\cup\beta}(v)$. Similarly, we refer to the set of edges that belong to both $\alpha$ and $\beta$ as $E(\alpha\cap\beta)$. We derive
	\begin{align*}
		&\E[\one_{A_n}X_\alpha X_\beta]=\E\bigg[\one_{A_n}\prod_{\{x,y\}\in E(\alpha\cup\beta)}\one\{x\leftrightarrow y\}\bigg]=\E\bigg[\one_{A_n}\prod_{\{x,y\}\in E(\alpha\cup\beta)}\p_\cW(x\leftrightarrow y)\bigg]\\
		&\leq \E\bigg[\one_{A_n}\prod_{\{x,y\}\in E(\alpha\cup\beta)}\frac{W_xW_y}{L_n}\bigg]\leq(\lambda\E[W]n)^{-|E(\alpha\cup\beta)|}\prod_{x\in V(\alpha\cup\beta)}\E\bigg[W_x^{\deg_{\alpha\cup\beta}(x)}\bigg].
	\end{align*}
	We continue with showing the first claim. Suppose that there is a fixed $N\in\N$ with $k,\ell\leq N$ and that $\alpha$ and $\beta$ share $j$ edges. Then $|E(\alpha\cup\beta)|=k+\ell-j\leq 2N$ and the degree of any vertex $v\in V(\alpha\cup\beta)$ must lie in $\{2,3,4\}$. Since $|V(\alpha\cup\beta)|\leq 2N$,
	\begin{align*}
		\E[\one_{A_n}X_\alpha X_\beta]&\leq \max\bigg(1,(\lambda\E[W])^{-2N}\bigg)n^{-k-\ell+j}\max(1,\E[W^2],\E[W^3],\E[W^4])^{2N}\eqqcolon Dn^{-k-\ell+j},
	\end{align*}
	which shows the first claim.
	
	For the second claim, one can choose $\lambda=\E[W^2]/\E[W]<1$ since $\E[W^2]<\E[W]$. Let $a>0$,  $k,\ell\leq \lfloor a\log(n) \rfloor$ and $\alpha\in I_k$ and $\beta\in I_\ell$ share $s$ segments which contain $i_1,\dots,i_s\geq 1$ vertices. Therefore, $|V(\alpha\cup\beta)|=k+\ell-|i|$. Now we analyse how many vertices of the possible degrees $2,3$ and $4$ are in $\alpha\cup\beta$. Whenever a segment has length one, this vertex needs to have degree $4$. We write $m_i(1)$ for the number of segments of length $1$  so that we have $m_i(1)$ vertices of degree $4$ in $\alpha\cup\beta$. When a segment has at least length two, there will be two vertices of degree $3$, namely start- and endpoint of the segment. Here we used that $\alpha\neq\beta$. We obtain $2(s-m_i(1))$ vertices of degree $3$. The remaining $k+\ell-|i|-2s+m_i(1)$ vertices have degree $2$. Finally, the number of edges is given by $|E(\alpha\cup\beta)|=k+\ell-|i|+s$. We obtain
	\begin{align*}
		&\E[\one_{A_n}X_\alpha X_\beta]\leq (n\E[W^2])^{-k-\ell+|i|-s}\E[W^4]^{m_i(1)}\E[W^3]^{2(s-m_i(1))}\E[W^2]^{k+\ell-|i|-2s+m_i(1)}\\
		&=\bigg(\frac{\E[W^4]\E[W^2]}{\E[W^3]^2}\bigg)^{m_i(1)}\bigg(\frac{\E[W^3]^2}{\E[W^2]^3}\bigg)^sn^{-k-\ell+|i|-s}\eqqcolon \gamma^{m_i(1)}\delta^{s} n^{-k-\ell+|i|-s}\leq \kappa^sn^{-k-\ell+|i|-s},
	\end{align*}
	where $\kappa=\max(1,\gamma)\delta$ and the last inequality uses $m_i(1)\leq s$. This concludes the proof.
\end{proof}
The following lemma will be employed to bound the third summand in Lemma \ref{lem:TV_bound}. 
\begin{lemma}\label{lem:bound_comparing_exp}
	Suppose that $\E[W^3]<\infty$.
	\begin{enumerate}[label=(\alph*)]
		\item Let $N\in\N$. There exists a constant $C>0$ such that for all $3\leq k\leq N$ and $n\in\N$,
		\begin{align*}
			\bigg|\frac{(n)_k}{2k}p_k-\lambda_k\bigg|\leq \frac{kC}{n}.
		\end{align*}
		\item Let $a>0$ and $\E[W^2]<\E[W]$. There exists a constant $C>0$ such that for all $n\in\N$ and $3\leq k\leq \lfloor a\log(n) \rfloor$,
		\begin{align*}
			\bigg|\frac{(n)_k}{2k}p_k-\lambda_k\bigg|\leq \frac{kC}{n}.
		\end{align*}
	\end{enumerate}
\end{lemma}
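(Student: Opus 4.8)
Write $m_j=\E[W^j]$ and recall $\lambda_k=\frac{1}{2k}(m_2/m_1)^k$, so it suffices to show $|(n)_k\,p_k-(m_2/m_1)^k|\le Ck^2/n$ in the respective range; dividing by $2k$ then gives the claim (finitely many pairs with $n$ small are absorbed into $C$). For a fixed $\alpha\in I_k$ let $W_1,\dots,W_k$ be the weights of its vertices and $W_0:=W_k$, so that $p_k=\E\bigl[\prod_{i=1}^k\tfrac{W_iW_{i-1}}{L_n+W_iW_{i-1}}\bigr]$. I would bound the difference through the two comparisons
\begin{align*}
	(n)_k\Bigl(\E\Bigl[\tfrac{\prod_{i=1}^kW_i^2}{L_n^k}\Bigr]-p_k\Bigr)\qquad\text{and}\qquad (n)_k\E\Bigl[\tfrac{\prod_{i=1}^kW_i^2}{L_n^k}\Bigr]-(m_2/m_1)^k,
\end{align*}
showing the first is $O(k/n)$ and the second is $(m_2/m_1)^k\,O(k^2/n)$.

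\emph{First comparison.} Lemma~\ref{lem:inequality_product} with $b=L_n$ and $x_i=W_iW_{i-1}$ gives the pointwise bound $0\le\tfrac{\prod_{i=1}^kW_i^2}{L_n^k}-\prod_{i=1}^k\tfrac{W_iW_{i-1}}{L_n+W_iW_{i-1}}\le\tfrac{\prod_{i=1}^kW_i^2\sum_{j=1}^kW_jW_{j-1}}{L_n^{k+1}}$. Taking expectations and splitting on $A_n=\{L_n>\lambda m_1n\}$, with $\lambda=m_2/m_1<1$ in part (b) and any fixed $\lambda\in(0,1)$ in part (a): on $A_n$ one has $L_n^{k+1}\ge(\lambda m_1n)^{k+1}$ and $\E[\prod_{i=1}^kW_i^2\sum_{j=1}^kW_jW_{j-1}]=k\,m_3^2m_2^{k-2}$ (the two consecutive indices get a third power), so after multiplying by $(n)_k\le n^k$ this part is at most $k\,m_3^2m_2^{k-2}/((\lambda m_1)^{k+1}n)$, equal to $km_3^2/(m_2^3n)$ in part (b) and $O(k/n)$ in part (a) since $k\le N$; on $A_n^c$ one uses $\p(A_n^c)\le e^{-cn}$ (Lemma~\ref{lem:An_exp_decay}) together with $L_n\ge W_i$ and Cauchy--Schwarz to get a bound of size $n^{O(1)}e^{-cn/2}$. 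This is where $\E[W^3]<\infty$ is used.

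\emph{Second comparison.} Using $L_n^{-k}=\tfrac{1}{(k-1)!}\int_0^\infty t^{k-1}e^{-tL_n}\,dt$ and independence,
\begin{align*}
	\E\Bigl[\tfrac{\prod_{i=1}^kW_i^2}{L_n^k}\Bigr]=\frac{1}{(k-1)!}\int_0^\infty t^{k-1}\psi(t)^k\phi(t)^{n-k}\,dt,\qquad \phi(t)=\E[e^{-tW}],\quad\psi(t)=\E[W^2e^{-tW}].
\end{align*}
Since $\psi(t)\le 4e^{-2}t^{-2}$ and $\phi$ is decreasing, for a fixed small $t_*$ the tail $\int_{t_*}^\infty$ is at most $n^{O(1)}\phi(t_*)^{n-k}$, hence negligible. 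On $[0,t_*]$ one sandwiches the integrand using $m_2-m_3t\le\psi(t)\le m_2$ and $-m_1t\le\log\phi(t)\le-m_1t+ct^2$ (from $e^{-x}\le 1-x+x^2/2$ and $\phi(t)\ge e^{-m_1t}$, valid for $t_*$ small). Splitting $[0,t_*]$ at $\tau_n\to0$ chosen so that $(n-k)\tau_n^2\to0$ while $(n-k)\tau_n/k\to\infty$ (possible as $k^2\ll n$), on $[\tau_n,t_*]$ the integrand is $\le e^{-(n-k)m_1t/2}$ and the resulting incomplete-Gamma tail is super-exponentially small; on $[0,\tau_n]$ the linear correction is absorbed by replacing $(n-k)m_1$ with $(n-k)m_1+2(m_3/m_2)k=(n-k)m_1(1+O(k/n))$, and the quadratic correction $e^{(n-k)ct^2}-1$ integrated against $t^{k-1}e^{-(n-k)m_1t}$ contributes only the factor $1+O\bigl((n-k)c(k/n)^2\bigr)=1+O(k^2/n)$. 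This yields $\E[\prod_{i=1}^kW_i^2/L_n^k]=\tfrac{m_2^k}{((n-k)m_1)^k}(1+O(k^2/n))$, and since $(n)_k/(n-k)^k=\prod_{j=0}^{k-1}\bigl(1+\tfrac{k-j}{n-k}\bigr)=1+O(k^2/n)$, multiplying by $(n)_k$ gives the asserted $(m_2/m_1)^k\,O(k^2/n)$.

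Combining, $|(n)_kp_k-(m_2/m_1)^k|\le O(k/n)+(m_2/m_1)^kO(k^2/n)$; in part (a) $(m_2/m_1)^k\le\max(1,m_2/m_1)^N$ and in part (b) $m_2<m_1$ so $(m_2/m_1)^k\le1$, hence in both cases the bound is $O(k^2/n)$, and part (a) is the easy sub-case with $k$ bounded (no use of $\E[W^2]<\E[W]$). The step I expect to be the genuine obstacle is the uniform-in-$k$ control of the Laplace-type integral in the second comparison: the Gaussian correction $e^{(n-k)ct^2}$ to $\phi(t)^{n-k}\approx e^{-(n-k)m_1t}$ is negligible only on the bulk scale $t\asymp k/n$, so one must show that the range $t\gtrsim k/n$ contributes nothing of the wrong order even though the crude estimate there would cost a factor of size $e^{\Omega(k)}$, which is fatal once $k$ grows like $\log n$.
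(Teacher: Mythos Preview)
Your first comparison matches the paper exactly (Lemma~\ref{lem:inequality_product} plus the deviation event $A_n$). The second comparison, however, is handled in the paper by a purely algebraic trick that avoids the Laplace integral entirely. Define $A_n$ via $L_{n,M_n}=\sum_{i>M_n}W_i$ so that $A_n$ and $L_{n,k}$ are independent of $W_1,\dots,W_k$. Then one replaces $L_n$ by $L_{n,k}$ at cost $R_3(k)=O(k/n)$ (mean value theorem on $x\mapsto x^{-k}$), and independence gives the exact identity
\[
\E\Bigl[\one_{A_n}\prod_{i=1}^k\frac{W_i^2}{L_{n,k}^k}\Bigr]=\Bigl(\frac{m_2}{m_1}\Bigr)^k\,\E\Bigl[\one_{A_n}\prod_{i=1}^k\frac{W_i}{L_{n,k}^k}\Bigr],
\]
so the target factor $(m_2/m_1)^k$ drops out for free. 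Going back to $L_n$ and using $(n)_k\,\E\bigl[\prod_{i=1}^kW_i/L_n^k\bigr]=\E\bigl[\sum_{v\in[n]^k_{\neq}}\prod_iW_{v_i}/L_n^k\bigr]=1-\E\bigl[\sum_{v\in[n]^k_{=}}\prod_iW_{v_i}/L_n^k\bigr]$ reduces the whole second comparison to a handful of elementary error terms, each bounded by $O(k/n)$ using only $A_n$ and $m_3<\infty$.

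Your Laplace approach also works, and the concern you flag at the end is already handled by your own splitting: with e.g.\ $\tau_n=n^{-3/4}$ one has $(n-k)\tau_n^2\to0$ while $(n-k)\tau_n\sim n^{1/4}$, so on $[\tau_n,t_*]$ the $2^k=n^{O(1)}$ loss from halving the exponential rate is crushed by the incomplete-Gamma tail $\exp(-\Omega(n^{1/4}))$. What the paper's route buys is that the uniformity in $k$ is automatic and no asymptotic analysis of integrals is needed; what your route buys is a direct asymptotic for $\E[\prod_iW_i^2/L_n^k]$ itself, which could be useful elsewhere but is more work here.
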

\begin{proof}
	We provide a single proof that works for both claims with the correct parameter choices. Let $M_n=N$ for the first claim and $M_n=\lfloor a\log(n)\rfloor$ for the second claim and $3\leq k\leq M_n$. We may assume without loss of generality that $M_n< n$ as $M_n=o(n)$ as $n\to\infty$. With $W_0=W_k$, 
	\begin{align*}
		p_k=\E\bigg[\prod_{i=1}^k\frac{W_i^2}{L_n+W_iW_{i-1}}\bigg].
	\end{align*}
	We provide lower and upper bounds on $p_k$ to obtain the desired statements. For $\lambda\in(0,1)$ and $1\leq j<n$ we define $L_{n,j}=\sum_{i=j+1}^nW_i$ and
	\begin{align*}
		A_n= \{L_{n,M_n}>(n-M_n)\lambda\E[W]\},
	\end{align*}
	where we choose $\lambda=1/2$ for the first claim of the lemma and $\lambda=\E[W^2]/\E[W]$ for the second claim. Note that $A_n$ is independent of $W_1,\dots,W_{k}$. For an upper bound we compute 
	\begin{align*}
		p_k\leq \E\bigg[\prod_{i=1}^k\frac{W_i^2}{L_n}\bigg]=\E\bigg[\one_{A_n}\prod_{i=1}^k\frac{W_i^2}{L_n}\bigg]+\E\bigg[\one_{A_n^c}\prod_{i=1}^k\frac{W_i^2}{L_n}\bigg]\eqqcolon \E\bigg[\one_{A_n}\prod_{i=1}^k\frac{W_i^2}{L_n}\bigg]+\frac{2k}{(n)_k}R_1(k).
	\end{align*}
	For a lower bound, we use Lemma \ref{lem:inequality_product} with $b=L_n$ and $x_i=W_iW_{i-1}$ for $i=1,\dots,k$ so that
	\begin{align*}
		p_k&=\E\left[\prod_{i=1}^k\frac{W_i^2}{L_n+W_iW_{i-1}}\right]\geq \E\left[\one_{A_n}\prod_{i=1}^k\frac{W_i^2}{L_n+W_iW_{i-1}}\right]\\
		&\geq \E\left[\one_{A_n}\prod_{i=1}^k\frac{W_i^2}{L_n}\right]-\E\left[\one_{A_n}\frac{1}{L_n^{k+1}}\sum_{j=1}^kW_jW_{j-1}\prod_{i=1}^kW_i^2\right]\eqqcolon \E\left[\one_{A_n}\prod_{i=1}^k\frac{W_i^2}{L_n}\right]-\frac{2k}{(n)_k}R_2(k).
	\end{align*}		
	We continue by showing
	\begin{align}
		\bigg|\frac{(n)_k}{2k}\E\bigg[\one_{A_n}\prod_{i=1}^k\frac{W_i^2}{L_n}\bigg]-\lambda_k\bigg|\leq\sum_{i=3}^6R_i(k)\label{eq:bound_R_i}
	\end{align}
	with
	\begin{align*}
		R_3(k)&=\frac{(n)_k}{2k}\E\bigg[\one_{A_n}\bigg(\frac{1}{L_{n,k}^k}-\frac{1}{L_{n}^k}\bigg)\prod_{i=1}^kW_i^2\bigg],\\
		R_4(k)&=(n)_k\lambda_k\E\bigg[\one_{A_n}\bigg(\frac{1}{L_{n,k}^k}-\frac{1}{L_n^k}\bigg)\prod_{i=1}^kW_i\bigg],\\
		R_5(k)&=(n)_k\lambda_k\E\bigg[\one_{A_n^c}\prod_{i=1}^k\frac{W_i}{L_n}\bigg]\text{ and}\\
		R_6(k)&=\lambda_k\E\bigg[\sum_{v\in[n]^k_{=}}\prod_{i=1}^k\frac{W_{v_i}}{L_n}\bigg],
	\end{align*}
	where $[n]^k_{=}=[n]^k\setminus[n]^k_{\neq}$ denotes the $k$-tuples in $[n]^k$ with at least two equal entries.	After establishing \eqref{eq:bound_R_i}, combining the lower and upper bound yields
	\begin{align*}
		\bigg|\frac{(n)_k}{2k}p_k-\lambda_k\bigg|\leq \sum_{i=1}^6R_i(k)
	\end{align*} 
	and it remains to show the claimed bound for $R_1(k),\dots,R_6(k)$. We show \eqref{eq:bound_R_i} by rewriting
	\begin{align*}
		\frac{(n)_k}{2k}\E\bigg[\one_{A_n}\prod_{i=1}^k\frac{W_i^2}{L_n}\bigg]=\frac{(n)_k}{2k}\E\bigg[\one_{A_n}\prod_{i=1}^k\frac{W_i^2}{L_{n,k}}\bigg]-R_3(k).
	\end{align*}
	Because $A_n$ and $L_{n,k}$ are independent of $W_1,\dots,W_k$, we derive
	\begin{align}
		&\frac{(n)_k}{2k}\E\bigg[\one_{A_n}\prod_{i=1}^k\frac{W_i^2}{L_{n,k}}\bigg]=\frac{(n)_k}{2k}\prod_{i=1}^k\frac{\E[W_i^2]}{\E[W_i]}\times \E\bigg[\one_{A_n}\prod_{j=1}^k\frac{W_j}{L_{n,k}}\bigg]=(n)_k\lambda_k\E\bigg[\one_{A_n}\prod_{i=1}^k\frac{W_i}{L_{n,k}}\bigg]\nonumber\\
		&=(n)_k\lambda_k\E\bigg[\one_{A_n}\prod_{i=1}^k\frac{W_i}{L_{n}}\bigg]+R_4(k)=(n)_k\lambda_k\E\bigg[\prod_{i=1}^k\frac{W_i}{L_n}\bigg]-R_5(k)+R_4(k).\nonumber
	\end{align}
	Observe that $|[n]^k_{\neq}|=(n)_k$ and use $\sum_{v\in[n]^k}\prod_{i=1}^kW_{v_i}=L_n^k$ to rewrite
	\begin{align*}
		(n)_k\lambda_k\E\bigg[\prod_{i=1}^k\frac{W_i}{L_{n}}\bigg]&=\lambda_k\E\bigg[\sum_{v\in[n]^k_{\neq}}\prod_{i=1}^k\frac{W_{v_i}}{L_{n}}\bigg]=\lambda_k-\lambda_k\E\bigg[\sum_{v\in[n]^k_{=}}\prod_{i=1}^k\frac{W_{v_i}}{L_{n}}\bigg]=\lambda_k-R_6(k).
	\end{align*}		
	Applying the triangle inequality shows \eqref{eq:bound_R_i} and we proceed with establishing 
	\begin{align}
		R_i(k)\leq \frac{kC_i}{n} \label{eq:bound_Rik}
	\end{align}
	for some constant $C_i>0$ and $i=1,\dots,6$. From Lemma \ref{lem:An_exp_decay} we obtain some $c>0$ with
	\begin{align*}
		n^{2k+1}\p(A_n^c)\leq n^{2M_n+1}\exp(-c(n-M_n))=\exp((2M_n+1)\log(n)-cn+cM_n)\to 0
	\end{align*}
	as $n\to\infty$. In particular there exists a constant $c_A$ not depending on $k$ and $n$ such that
	\begin{align*}
		\p(A_n^c)\leq c_An^{-2k-1}.
	\end{align*}
	Using $W_i\leq L_n$ for $i=1,\dots,n$ as well as the independence of $A_n$ and $W_1,\dots,W_{M_n}$ we get
	\begin{align*}
		R_1(k)=\frac{(n)_k}{2k}\E\bigg[\one_{A_n^c}\prod_{i=1}^k\frac{W_i^2}{L_n}\bigg]\leq n^k\E[W]^k\p(A_n^c)\leq c_A\E[W]^kn^{k-2k-1}= c_A\bigg(\frac{\E[W]}{n}\bigg)^kn^{-1}
	\end{align*}
	showing \eqref{eq:bound_Rik} for $i=1$.
	Concerning $R_5(k)$ we obtain similarly, using the definition of $\lambda_k$ in \eqref{eq:def_lambda_k},
	\begin{align}
		R_5(k)=(n)_k\lambda_k\E\bigg[\one_{A_n^c}\prod_{i=1}^k\frac{W_i}{L_n}\bigg]\leq n^k\bigg(\frac{\E[W^2]}{\E[W]}\bigg)^k\p(A_n^c)\leq \bigg(\frac{\E[W^2]}{n\E[W]}\bigg)^kc_An^{-1},\label{eq:R5}
	\end{align}
	which implies \eqref{eq:bound_Rik} for $i=5$. Additionally, we have 
	\begin{align}
		R_6(k)=\lambda_k\E\bigg[\one_{A_n}\sum_{v\in[n]^k_{=}}\prod_{i=1}^k\frac{W_{v_i}}{L_n}\bigg]+\lambda_k\E\bigg[\one_{A_n^c}\sum_{v\in[n]^k_{=}}\prod_{i=1}^k\frac{W_{v_i}}{L_n}\bigg]. \label{eq:R6_bounds}
	\end{align}
	For the second summand, note that the sum is bounded by one so that
	\begin{align*}
		\lambda_k\E\bigg[\one_{A_n^c}\sum_{v\in[n]^k_{=}}\prod_{i=1}^k\frac{W_{v_i}}{L_n}\bigg]\leq \frac{1}{2k}\bigg(\frac{\E[W^2]}{\E[W]}\bigg)^k\p(A_n^c),
	\end{align*}
	which can be bounded as in \eqref{eq:R5}. For the first summand on the right hand side in \eqref{eq:R6_bounds}, there are fewer than $k^2$ choices for the indices of two equal entries of $v\in[n]^k_=$. This gives us
	\begin{align*}
		\sum_{v\in[n]^k_=}\prod_{i=1}^kW_{v_i}\leq k^2\sum_{v\in[n]}W_v^2L_n^{k-2}
	\end{align*}
	and thus, using $A_n$ to bound $L_n$,
	\begin{align*}
		\lambda_k\E\bigg[\one_{A_n}\sum_{v\in[n]^k_{=}}\prod_{i=1}^k\frac{W_{v_i}}{L_{n}}\bigg]\leq \lambda_k k^2n\E\bigg[\one_{A_n}\frac{W_1^2}{L_n^2}\bigg]\leq \frac{k}{2n}\bigg(\frac{n}{n-M_n}\bigg)^2\frac{\E[W^2]^{k+1}}{\lambda^2\E[W]^{k+2}}.
	\end{align*}
	The second factor on the right-hand side converges to $1$ as $n\to\infty$ since $M_n=o(n)$. The last term is bounded by its maximum over $k=3,\dots,N$ in the first case. In the second case we have $\E[W^2]<\E[W]$ so that the fraction is bounded by $\lambda^{-2}/\E[W]$, showing \eqref{eq:bound_Rik} for $i=6$. 
	
	 Before we continue with the remaining terms $R_2(k),R_3(k)$ and $R_4(k)$, we observe that the inequality $1+x\leq e^x$ for $x\in\R$ and $M_n^2=o(n)$ imply
	\begin{align}
		1\leq \bigg(\frac{n}{n-M_n}\bigg)^{k}\leq \bigg(\frac{n}{n-M_n}\bigg)^{M_n}=\bigg(1+\frac{M_n}{n-M_n}\bigg)^{M_n}\leq \exp\bigg(\frac{M_n^2}{n-M_n}\bigg)\to 1\label{eq:asympt_eq_n-log(n)_n}
	\end{align}
	as $n\to\infty$. For $R_2(k)$ we again use $A_n$ to bound $L_n^{-k-1}$ and compute
	\begin{align}
		R_2(k)&=\frac{(n)_k}{2k}\E\bigg[\one_{A_n}\frac{1}{L_n^{k+1}}\sum_{j=1}^kW_jW_{j-1}\prod_{i=1}^kW_i^2\bigg]\leq \frac{n^k}{2k}\frac{k\E[W^3]^2\E[W^2]^{k-2}}{((n-M_n)\lambda\E[W])^{k+1}}\nonumber\\
		&=\frac{1}{2n}\bigg(\frac{n}{n-M_n}\bigg)^{k+1}\frac{\E[W^3]^2\E[W^2]^{k-2}}{(\lambda\E[W])^{k+1}}.\label{eq:R2_bound}
	\end{align}
	By \eqref{eq:asympt_eq_n-log(n)_n}, the second fraction in \eqref{eq:R2_bound} is bounded by some constant. For the third fraction we do a case distinction. For $3\leq k\leq N$ the bound is trivial by taking the maximum. In the second setting, plugging in $\lambda=\E[W^2]/\E[W]$ simplifies the expression to $\E[W^3]^2/\E[W^2]^3$ and shows \eqref{eq:bound_Rik} for $i=2$.	For $R_3(k)$ we apply the mean value theorem to $f(x)=x^{-k}$. This gives us
	\begin{align*}
		\frac{1}{L_{n,k}^k}-\frac{1}{L_n^k}\leq k\sum_{j=1}^kW_jL_{n,k}^{-k-1}
	\end{align*}
	so that, using $A_n$ to bound $L_{n,k}^{-k-1}$,
	\begin{align*}
		R_3(k)&=\frac{(n)_k}{2k}\E\bigg[\one_{A_n}\bigg(\frac{1}{L_{n,k}^k}-\frac{1}{L_{n}^k}\bigg)\prod_{i=1}^kW_i^2\bigg]\leq \frac{n^k}{2k}\E\bigg[\one_{A_n}k\sum_{j=1}^kW_jL_{n,k}^{-k-1}\prod_{i=1}^kW_i^2\bigg]\\
		&\leq \frac{n^k}{2k}k^2\E[W^3]\E[W^2]^{k-1}\bigg(\frac{1}{(n-M_n)\lambda\E[W]}\bigg)^{k+1}=\frac{k}{2n}\bigg(\frac{n}{n-M_n}\bigg)^{k+1}\frac{\E[W^3]\E[W^2]^{k-1}}{(\lambda\E[W])^{k+1}}.
	\end{align*}
	The same argument as for \eqref{eq:R2_bound} yields \eqref{eq:bound_Rik} for $i=3$. Similarly, we obtain for $R_4(k)$ that
	\begin{align*}
		R_4(k)&=(n)_k\lambda_k\E\bigg[\one_{A_n}\bigg(\frac{1}{L_{n,k}^k}-\frac{1}{L_n^k}\bigg)\prod_{i=1}^kW_i\bigg]\\
		&\leq \frac{n^k}{2k}\bigg(\frac{\E[W^2]}{\E[W]}\bigg)^kk^2\E[W^2]\E[W]^{k-1}\bigg(\frac{1}{(n-M_n)\lambda\E[W]}\bigg)^{k+1}\\
		&= \frac{k}{2n}\bigg(\frac{n}{n-M_n}\bigg)^{k+1}\bigg(\frac{\E[W^2]}{\E[W]}\bigg)^{k+1}\frac{1}{\E[W]\lambda^{k+1}}.
	\end{align*}
	This concludes the proof.
\end{proof}
\subsection{Proofs of the theorems}\label{subsec:quantitative_theorems}
\begin{proof}[Proof of Theorem \ref{thm:conv_rate_bounded}]	
	By Lemma \ref{lem:TV_bound} we have for $A\subseteq\{3,\dots,N\}$,
	\begin{align*}
		d_{TV}(\cC_n(A),Z)&\leq \frac{1}{2n}\sum_{k,\ell=3}^Np_kp_\ell n^{k+\ell}+\sum_{k,\ell=3}^N\sum_{s=1}^k\sum_{i\in [k]^s}p_{k,\ell,s,i}(2k\ell)^{s-1}n^{k+\ell-|i|}\\
		&\quad +\sum_{k=3}^N\bigg|\frac{(n)_k}{2k}p_k-\lambda_k\bigg|.
	\end{align*}
	We show that all summands can be bounded by $C/n$ for some $C>0$. For the third summand, part (a) of Lemma \ref{lem:bound_comparing_exp} yields the existence of some $C_1>0$ such that for all $n\in\N$,
	\begin{align*}
		\sum_{k=3}^N\bigg|\frac{(n)_k}{2k}p_k-\lambda_k\bigg|\leq \sum_{k=3}^N \frac{kC_1}{n}\leq \frac{C_1N^2}{n}.
	\end{align*}
	  Next we adress the first summand. From part (a) of Lemma \ref{lem:bound_p_alpha} we know that there exists $C_2>0$ with $p_k\leq C_2n^{-k}$ for all $3\leq k\leq N$ and $n\in\N$. Therefore,
	\begin{align*}
		\frac{1}{2n}\sum_{k,\ell=1}^Np_kp_\ell n^{k+\ell}\leq \frac{N^2C_2^2}{2n}.
	\end{align*}
	For the second summand, note that when $\alpha$ and $\beta$ intersect in $s$ segments of lengths $i_1,\dots,i_s$ they share $|i|-s$ edges. Thus, part (a) of Lemma \ref{lem:bound_p_alpha_beta} yields the existence of some $C_3>0$ with
	\begin{align*}
		&\sum_{k,\ell=1}^N\sum_{s=1}^k\sum_{i\in [k]^s}p_{k,\ell,s,i}(2k\ell)^{s-1}n^{k+\ell-|i|}\leq \sum_{k,\ell=1}^N\sum_{s=1}^k\sum_{i\in [k]^s}C_3n^{-k-\ell+|i|-s}(2k\ell)^{s-1}n^{k+\ell-|i|}\\
		&=\frac{C_3}{n} \sum_{k,\ell=1}^N \sum_{s=0}^{k-1} k^{s+1}(2kl)^{s}n^{-s},
	\end{align*}
	where the sums are bounded in $n$, which concludes the proof.	
\end{proof}

\begin{proof}[Proof of Theorem \ref{thm:no_long_cycles}]
	By assumption we have $\E[W^2]<\E[W]$ so that we may choose
	\begin{align*}
		\lambda=\frac{\E[W^2]+\E[W]}{2\E[W]}<1.
	\end{align*}
	Define $A_n=\{L_n> \lambda\E[W]n\}$ so that Lemma \ref{lem:An_exp_decay} yields a positive constant $c$	such that
	\begin{align}
		\p(\cC_n(\lfloor a\log(n) \rfloor+1,\dots,n)>0)&\leq \p(A_n^c)+\p(\one_{A_n}\cC_n(\lfloor a\log(n) \rfloor+1,\dots,n)>0)\nonumber\\
		&\leq e^{-cn}+\sum_{k=\lfloor a\log(n) \rfloor+1}^n\sum_{\alpha\in I_k}\E[\one_{A_n}X_\alpha],\label{eq:no_cycles}
	\end{align}
	where we used the Markov inequality for the second summand. Since the first summand decays faster than the desired order, it suffices to show the claimed bound for the second summand. With \eqref{eq:bd_X_alpha} we compute for $k\geq3$ and $\alpha\in I_k$,
	\begin{align*}
		\E[\one_{A_n}X_\alpha]&\leq n^{-k}\bigg(\frac{\E[W^2]}{\E[W]}\frac{2\E[W]}{\E[W^2]+\E[W]}\bigg)^k=n^{-k}\bigg(\frac{2\E[W^2]}{\E[W^2]+\E[W]}\bigg)^k= n^{-k} p^k,
	\end{align*}
	with $p$ as in Theorem \ref{thm:no_long_cycles} and where $\E[W^2]<\E[W]$ yields $p<1$. Together with $|I_k|\leq n^k $ we obtain for the second summand in \eqref{eq:no_cycles} that
	\begin{align*}
		\sum_{k=\lfloor a\log(n) \rfloor+1}^n\sum_{\alpha\in I_k}\E[\one_{A_n}X_\alpha]\leq \sum_{k=\lfloor a\log(n) \rfloor+1}^np^k\leq p^{\lfloor a\log(n) \rfloor+1}\sum_{k=0}^\infty p^k=p^{\lfloor a\log(n) \rfloor+1}\frac{1}{1-p}.
	\end{align*}
	Since $\lfloor a\log(n) \rfloor+1\geq a\log(n)$ and $p<1$, we have
	\begin{align*}
		p^{\lfloor a\log(n) \rfloor+1}\leq p^{a\log(n)}= n^{\log(p)a},
	\end{align*}
	which proves the claim.
\end{proof}

\begin{proof}[Proof of Theorem \ref{thm:conv_rate_unbounded}]	
	Let
	\begin{align*}
		p=\frac{2\E[W^2]}{\E[W^2]+\E[W]} \quad \text{and}\quad \rho=\frac{\E[W^2]}{\E[W]}
	\end{align*}
	so that $p,\rho<1$ due to $\E[W^2]<\E[W]$. Choose $a>0$ large enough such that
	\begin{align*}
		a\log(p)<-1\quad \text{and}\quad a\log(\rho)<-1.
	\end{align*}
	Writing $M_a=\{3,4,\dots,\lfloor a\log(n)\rfloor\}$ and $\overline{M}_a=\{\lfloor a\log(n)\rfloor +1,\dots\}$ we have
	\begin{align}
		d_{TV}(\cC_n(A),\eta(A))&\leq d_{TV}(\cC_n(A\cap M_a),\eta(A\cap M_a))+\p(\cC_n(A)\neq\cC_n(A\cap M_a))\nonumber\\
		&\quad +\p(\eta(A)\neq \eta(A\cap M_a))\nonumber\\
		&=d_{TV}(\cC_n(A\cap M_a),\eta(A\cap M_a))+\p(\cC_n(\overline{M}_a)>0)+\p(\eta(\overline{M}_a)>0).\label{eq:Tv_bound_thm_2}
	\end{align}
	For the second summand above we use Theorem \ref{thm:no_long_cycles} to obtain a constant $C_1>0$ with
	\begin{align*}
		\p(\cC_n(\overline{M}_a)>0)\leq C_1n^{a\log(p)}\leq C_1n^{-1},
	\end{align*}
	by the choice of $a$. For the third summand the definition of $\eta$ yields
	\begin{align*}
		\p(\eta(\overline{M}_a)>0)=1-\p(\eta(\overline{M}_a)=0)=1-\exp\bigg(-\sum_{k=\lfloor a\log(n)\rfloor +1}^\infty \lambda_k\bigg).
	\end{align*}
		We use $1-\exp(-x)\leq x$ for $x\in\R$ and $\lambda_k=\rho^k/(2k)\leq \rho^k$ to derive
	\begin{align*}
		1-\exp\bigg(-\sum_{k=\lfloor a\log(n) \rfloor+1}^\infty \lambda_k\bigg)\leq \sum_{k=\lfloor a\log(n) \rfloor+1}^\infty \rho^k\leq \rho^{\lfloor a\log(n) \rfloor+1}\sum_{k=0}^\infty \rho^k.
	\end{align*}
	Since $\rho<1$, this is further bounded by
	\begin{align*}
		 \rho^{a\log(n)}\frac{1}{1-\rho}=n^{a\log(\rho)}\frac{1}{1-\rho}\leq \frac{n^{-1}}{1-\rho},
	\end{align*}
	again by the choice of $a$. It remains to consider the first summand of \eqref{eq:Tv_bound_thm_2}. By Lemma \ref{lem:TV_bound} we have
	\begin{align*}
		d_{TV}(\cC_n(A\cap M_a),\eta(A\cap M_a))&\leq \frac{1}{2n}\sum_{k,\ell\in M_a}p_kp_\ell n^{k+\ell}+\sum_{k,\ell\in M_a}\sum_{s=1}^k\sum_{i\in [k]^s}p_{k,\ell,s,i}(2k\ell)^{s-1}n^{k+\ell-|i|}\\
		&\quad +\sum_{k\in M_a}\bigg|\frac{(n)_k}{2k}p_k-\lambda_k\bigg|.
	\end{align*}
	For the third summand we apply part (b) of Lemma \ref{lem:bound_comparing_exp} which yields the existence of some constant $D_1>0$ such that for all $n\in\N$,
	\begin{align*}
		\sum_{k\in M_a}\bigg|\frac{(n)_k}{2k}p_k-\lambda_k\bigg|\leq \sum_{k=3}^{\lfloor a\log(n) \rfloor}\frac{D_1k}{n}\leq  \frac{D_1a^2\log(n)^2}{n}.
	\end{align*}
	For the first summand we use part (b) of Lemma \ref{lem:bound_p_alpha} to obtain the existence of a constant $D_2>0$ with
	\begin{align*}
	\frac{1}{2n}\sum_{k,\ell\in M_a}p_kp_\ell n^{k+\ell}\leq \frac{1}{2n}\sum_{k,\ell=3}^{\lfloor a\log(n) \rfloor}D_2^2n^{-k-\ell}n^{k+\ell}\leq \frac{D_2^2a^2\log(n)^2}{2n}.
	\end{align*}
	Finally, for the second summand, we obtain via part (b) of Lemma \ref{lem:bound_p_alpha_beta} the existence of constants $D_3>0$ and $\kappa\geq 1$ such that
	\begin{align*}
		&\sum_{k,\ell\in M_a}\sum_{s=1}^k\sum_{i\in [k]^s}p_{k,\ell,s,i}(2k\ell)^{s-1}n^{k+\ell-|i|}\leq \sum_{k,\ell=3}^{\lfloor a\log(n)\rfloor}\sum_{s=1}^k\sum_{i\in [k]^s}D_3\kappa ^sn^{-k-\ell+|i|-s}(2k\ell)^{s-1}n^{k+\ell-|i|}\\
		&=\sum_{k,\ell=3}^{\lfloor a\log(n) \rfloor}\frac{D_3}{2k\ell}\sum_{s=1}^k\bigg(\frac{2k^2\ell\kappa}{n}\bigg)^s= \sum_{k,\ell=3}^{\lfloor a\log(n) \rfloor}\frac{k\kappa D_3}{n}\sum_{s=0}^{k-1}\bigg(\frac{2k^2\ell\kappa}{n}\bigg)^s\\
		&\leq \frac{\kappa D_3}{n}a^3\log(n)^3\sum_{s=0}^n\bigg(\frac{2\lfloor a\log(n) \rfloor^3\kappa}{n}\bigg)^s,
		\end{align*}
		where the series converges to $1$ due to $\lfloor a\log(n) \rfloor^3/n\to0$ as $n\to\infty$ and a comparison to the geometric series. This shows the claim.
\end{proof}

\begin{proof}[Proof of Theorem \ref{thm:shortest_longest_cycle}]
	We relate the lengths of the shortest and longest cycle, respectively, to properties of the point process $\cC_n$. Additionally, we express the probabilities of the limiting random variables $\cS$ and $\cL$ in terms of the point process $\eta$. The claim then follows from Theorem \ref{thm:conv_rate_unbounded}.
	For the claim on the shortest cycle we compute
	\begin{align*}
		&d_{Kol}(\cC_{\min}^{(n)},\cS )=\sup_{t\in\R}|\p(\cC_{\min}^{(n)}\leq t)-\p(\cS \leq t)|\\
		&\leq |\p(\cC_{\min}^{(n)}=0)-\p(\cS =0)|+\sup_{t\in\N_{\geq3}}|\p(3\leq \cC_{\min}^{(n)}\leq t)-\p(3\leq \cS \leq t)|.
	\end{align*}
	We defined the shortest cycle to have length zero if and only if there is no cycle. Additionally, the shortest cycle having a length between 3 and $n$ is equivalent to the existence of at least one cycle of that length. With the definitions of $\eta$ and $\cS$ the term above equals
	\begin{align*}
		&|\p(\cC_n(3,4,\dots)=0)-\p(\eta(3,4,\dots)=0)|+\sup_{t\in\N_{\geq3}}\p(\cC_n(3,\dots,t)>0)-\p(\eta(3,\dots,t)>0)|\\
		&\leq 2\sup_{A\subseteq\N_{\geq 3}}d_{TV}\big(\cC_n(A),\eta(A)\big)\leq \frac{2C\log(n)^3}{n}
	\end{align*}
	for some constant $C>0$ by Theorem \ref{thm:conv_rate_unbounded}. For the longest cycle we proceed similarly and have
	\begin{align*}
		&d_{Kol}(\cC_{\max}^{(n)},\cL )=\sup_{t\in\R}|\p(\cC_{\max}^{(n)}\leq t)-\p(\cL \leq t)|\\
		&\leq |\p(\cC_{\max}^{(n)}=0)-\p(\cL =0)|+\sup_{t\in\N_{\geq3}}|\p(\cC_{\max}^{(n)}>t)-\p(\cL>t)|\\
		&=|\p(\cC_n(3,4,\dots)=0)-\p(\eta(3,4,\dots)=0)|\\
		&\quad +\sup_{t\in\N_{\geq3}}|\p(\cC_n(t+1,\dots)>0)-\p(\eta(t+1,\dots)>0)|\\
		&\leq 2\sup_{A\subseteq\N_{\geq 3}}d_{TV}\big(\cC_n(A),\eta(A)\big)\leq \frac{2C\log(n)^3}{n},
	\end{align*}
	which concludes the proof.
\end{proof}
\section{Proofs of Lemma \ref{lem:two_mom_suff_conditioned_version} and Lemma \ref{lem:Tv_bound_both_terms}}\label{sec:poisson_lemma}

\begin{proof}[Proof of Lemma \ref{lem:two_mom_suff_conditioned_version}.]
	The total variation distance satisfies
	\begin{align*}
		d_{TV}(S,T_\cA)=\frac{1}{2}\sup_{||h||_{\infty}=1}|\E[h(S)-h(T_\cA)]|,
	\end{align*}
	where the supremum runs over all functions $h\colon\N_0\to\R$ with supremum norm equal to one. Thus,
	\begin{align*}
		2d_{TV}(S,T_\cA)&=\sup_{||h||_{\infty}=1}\big|\E\big[\E_\cA[h(S)-h(T_\cA)]\big]\big|\leq \E\bigg[\sup_{||h||_{\infty}=1}\big|\E_\cA[h(S)-h(T_\cA)]\big|\bigg]\\
		&= \E\bigg[\min\bigg(2,\sup_{||h||=1}\E_\cA[h(S)-h(T_\cA)]\bigg)\bigg],
	\end{align*}
	where the last equality uses $||h||_{\infty}=1$ and omits the absolute value as one can choose $-h$ when the expression is negative. Now one can mimic the proof of Theorem 1 in \cite{twomomentssuffice} to bound the supremum and obtain the desired statement.
\end{proof}
	
\begin{proof}[Proof of Lemma \ref{lem:Tv_bound_both_terms}]
		Let $T_\cA \sim\mathrm{Poi}(\E_\cA[S])$, then the triangle inequality and Lemma \ref{lem:two_mom_suff_conditioned_version} yield
		\begin{align*}
			d_{TV}(S,Z)\leq d_{TV}(S,T_\cA)+d_{TV}(T_\cA,Z)\leq \E\big[\min(1,b_1+b_2+b_3)\big]+d_{TV}(T_\cA,Z).
		\end{align*}
	We compute
		\begin{align*}
			&d_{TV}(T_\cA,Z)=\sup_{A\subseteq \N_0}|\p(T_\cA\in A)-\p(Z\in A)|=\sup_{A\subseteq \N_0}|\E[\p_\cA(T_\cA \in A)-\p(Z\in A)]|\nonumber\\
			&\leq \E[\sup_{A\subseteq \N_0}|\p_\cA(T_\cA\in A)-\p(Z\in A)|]=\E[\min(1,\sup_{A\subseteq \N_0}|\p_\cA(T_\cA\in A)-\p(Z\in A)|)].
		\end{align*}
		Since, conditionally on $\cA$, $T_\cA$ follows a Poisson distribution with parameter $\lambda_\cA=\E_\cA[S]$ and $Z$ follows a Poisson distribution with parameter $\mu$, we obtain for all $A\subseteq\N_0$,
		\begin{align*}
			|\p_\cA(T_\cA\in A)-\p(Z\in A)|\leq|\lambda_\cA-\mu|,
		\end{align*}
		see e.g.\ Example 1 in Section 3 of Chapter 21 in \cite{schinazi}. The assertion follows.
\end{proof}
\subsection*{Acknowledgements}
The author would like to thank Matthias Schulte for fruitful discussions and valuable comments on the presentation of the results. Additional thanks go to Vanessa Trapp for helpful feedback on an earlier draft.


\begin{thebibliography}{10}
	
	\bibitem{poissoncramerwold}
	O. Angel, R. van~der Hofstad and C. Holmgren.
	\newblock {\em Limit laws for self-loops and multiple edges in the configuration}
	model.
	\newblock {Ann. Inst. Henri Poincar\'{e} Probab. Stat.}, 55(3):1509--1530,
	(2019).
	
	\bibitem{twomomentssuffice}
	R.~Arratia, L.~Goldstein and L.~Gordon.
	\newblock {\em Two moments suffice for {P}oisson approximations: the {C}hen-{S}tein
	method.}
	\newblock {Ann. Probab.}, 17(1):9--25, (1989).
	

	\bibitem{vdHBhavLe2010}
	S. Bhamidi, R. van~der Hofstad and J. S.~H. van Leeuwaarden.
	\newblock {\em Scaling limits for critical inhomogeneous random graphs with finite
	third moments.}
	\newblock {Electron. J. Probab.}, 15(54):1682--1703, (2010).
	
	\bibitem{vdHBhavLe2012}
	S. Bhamidi, R. van~der Hofstad and J. S.~H. van Leeuwaarden.
	\newblock {\em Novel scaling limits for critical inhomogeneous random graphs.}
	\newblock {Ann. Probab.}, 40(6):2299--2361, (2012).
	
	\bibitem{bianconi2005}
	G. Bianconi and M. Marsili.
	\newblock {\em Loops of any size and hamilton cycles in random scale-free networks.}
	\newblock {JSTAT}, (2005).
	
	\bibitem{bobkov}
	S.~G. Bobkov, M.~A. Danshina and V.~V. Ulyanov.
	\newblock {\em Rate of convergence to the {P}oisson law of the numbers of cycles in
	the generalized random graphs.}
	\newblock {Operator Th. Harmonic Anal., OTHA 2020.}  {Springer Proc. Math. Stat.}, 358:109--133. Springer, Cham, (2021).
	
	\bibitem{BollobasBook}
	B. Bollob\'{a}s.
	\newblock {\em Random graphs}, { Cambridge Studies in Advanced
		Mathematics}, vol. 73.
	\newblock Cambridge University Press, Cambridge, second edition, (2001).
	
	\bibitem{BolJanRio2007}
	B. Bollob\'{a}s, S. Janson and O. Riordan.
	\newblock {\em The phase transition in inhomogeneous random graphs.}
	\newblock {Random Struct. Alg.}, 31(1):3--122, (2007).
	
	\bibitem{GenRandGraph}
	T. Britton, M. Deijfen and A. Martin-L\"{o}f.
	\newblock {\em Generating simple random graphs with prescribed degree distribution.}
	\newblock {J. Stat. Phys.}, 124(6):1377--1397, (2006).
	
	\bibitem{ChungLu2}
	F. Chung and L. Lu.
	\newblock {\em The average distances in random graphs with given expected degrees.}
	\newblock {Proc. Natl. Acad. Sci. USA}, 99(25):15879--15882, (2002).
	
	\bibitem{steinsmethod}
	T. Erhardsson. {\em Stein's method for Poisson and compound Poisson approximation.} Lect. Notes Ser. Inst. Math. Sci. Natl. Univ. Singap., 4:61–113. Singapore University Press, Singapore, (2005).
	
	\bibitem{vdEsker}
	H. van~den Esker, R. van~der Hofstad and G. Hooghiemstra.
	\newblock {\em Universality for the distance in finite variance random graphs.}
	\newblock {J. Stat. Phys.}, 133(1):169--202, (2008).
	
	\bibitem{vdh_counting_triangles}
	P.~Gao, R. van~der Hofstad, A. Southwell and C. Stegehuis.
	\newblock {\em Counting triangles in power-law uniform random graphs.}
	\newblock {Electron. J. Combin.}, 27(3):P3.19, (2020).
	
	\bibitem{vdH_book}
	R. van~der Hofstad.
	\newblock {\em Random graphs and complex networks. {V}ol. 1}, volume [43] of
	{Cambridge Series in Statistical and Probabilistic Mathematics}.
	\newblock Cambridge University Press, Cambridge, (2017).
	
	\bibitem{clara_optimal_subgraph}
	R. van~der Hofstad, J. S.~H. van Leeuwaarden and C. Stegehuis.
	\newblock {\em Optimal subgraph structures in scale-free configuration models.}
	\newblock {Ann. Appl. Probab.}, 31(2):501--537, (2021).
	
	\bibitem{Janson2008_comp_size}
	S. Janson.
	\newblock {\em The largest component in a subcritical random graph with a power law}
	degree distribution.
	\newblock {Ann. Appl. Probab.}, 18(4):1651--1668, (2008).
	
	\bibitem{Janson2008}
	S. Janson.
	\newblock {\em Asymptotic equivalence and contiguity of some random graphs.}
	\newblock {Random Struct. Alg.}, 36(1):26--45, (2010).
	
	\bibitem{Janssen2019}
	A.~J. E.~M. Janssen, J. S.~H. van Leeuwaarden and S. Shneer.
	\newblock {\em Counting cliques and cycles in scale-free inhomogeneous random
	graphs.}
	\newblock {J. Stat. Phys.}, 175(1):161--184, (2019).
	
	
	\bibitem{Kallenberg2021} O. Kallenberg. {\em Foundations of modern probability}, Springer International Publishing, (2021).
	
	\bibitem{Kallenberg_random_measures}
	O. Kallenberg.
	\newblock {\em Random measures, theory and applications}, {
		Probability Theory and Stochastic Modelling}, vol. 77.
	\newblock Springer, Cham, (2017).
	
	\bibitem{liu_dong}
	Q. Liu and Z. Dong.
	\newblock {\em Limit laws for the number of triangles in the generalized random
	graphs with random node weights.}
	\newblock {Stat. \& Probab. Letters}, 161:108733, (2020).
	
	\bibitem{lower_bound}
	Q. Liu, Z. Dong and E.~Wang.
	\newblock {\em Moment-based spectral analysis of large-scale generalized random
	graphs.}
	\newblock {IEEE Access}, 5:9453--9463, (2017).
	
	\bibitem{NorrosReittu2006}
	I. Norros and H. Reittu.
	\newblock {\em On a conditionally {P}oissonian graph process.}
	\newblock {Adv. in Appl. Probab.}, 38(1):59--75, (2006).
	
	\bibitem{ross} N. Ross.
	\newblock{\em Fundamentals of Stein's method}
	\newblock{Probab. Surveys, 8 210--293, (2011).}	
	
	\bibitem{schinazi}
	R.~B. Schinazi.
	\newblock {\em Probability with statistical applications}.
	\newblock Birkh\"{a}user/Springer, Cham, third edition, (2022).
	
	\bibitem{welldone}
	I. Voitalov, P. van~der Hoorn, R. van~der Hofstad and D. Krioukov.
	\newblock {\em Scale-free networks well done.}
	\newblock {Phys. Rev. Res.}, 1:033034, (2019).
	
\end{thebibliography}
\end{document}